\titleformat{\section}[hang]
  {\normalfont\bfseries}{\thesection.}{1em}{}
\newtheorem{theorem}{Theorem}
\newtheorem{proposition}[theorem]{Proposition}
\newtheorem{lemma}[theorem]{Lemma}
\newtheorem{corollary}[theorem]{Corollary}
\theoremstyle{definition}
\newtheorem{remark}[theorem]{Remark}
\newtheorem{conjecture}[theorem]{Conjecture}
\newtheorem{example}[theorem]{Example}
\newcommand{\PP}{\mathbb{P}} 
\newcommand{\RR}{\mathbb{R}}
\newcommand{\CC}{\mathbb{C} }
\newcommand{\ZZ}{\mathbb{Z}}
\newcommand{\A}{\mathbb{A}}
\title{\bf Coupled Cluster Degree of the~Grassmannian}
\author{Viktoriia Borovik,
Bernd Sturmfels and Svala Sverrisdóttir}
\date{}
\begin{document}
\maketitle

\begin{abstract}
\noindent    
We determine the~number of complex solutions to a~nonlinear eigenvalue problem
 on~the~Grassmannian  in~its Pl\"ucker embedding.   This is motivated by
 quantum chemistry, where it represents the~truncation to single electrons in~coupled cluster theory. 
 We prove the~formula for the~Grassmannian of lines which was conjectured in earlier work with
 Fabian Faulstich. This rests on the geometry of  the
 graph of a birational parametrization of the Grassmannian. 
 We present  a squarefree Gr\"obner basis for this graph,
 and we develop connections to
   toric degenerations from representation theory.
        \end{abstract}

\section{Introduction}

\noindent
The electronic Schrödinger equations describe the behavior of electrons within molecules and crystals.
This is of great interest in the field of quantum chemistry. The equations represent an eigenfunction problem for a differential operator called the Hamiltonian. Using discretization we can turn the problem into a 
finite-dimensional eigenvalue problem for the Hamiltonian matrix. Due~to the high dimensionality of this
eigenvalue problem, efficient and tractable numerical
schemes are essential for approximating the equations. Coupled cluster theory,
which is considered the gold standard in quantum chemistry, approximates the equations by a hierarchy of polynomial systems at various
levels of truncation \cite{FO, FSS}. 

In this paper we focus on the CCS (Coupled Cluster Single) model, allowing only single level electronic excitations. The coupled cluster equations
for the CCS model take the~form
\begin{equation}
\label{eq:CCeqns}
(H \psi)_* \,= \,\lambda \,\psi_* \quad {\rm for} \quad \psi \in {\rm Gr}(d,n). 
\end{equation}
Here,~${\rm Gr}(d,n)$ is the~Grassmannian~in its
Pl\"ucker embedding in~$\PP^{\binom{n}{d}-1}$,
and~$\psi$ is the~column vector of  Pl\"ucker coordinates~$\psi_{i_1 i_2 \cdots i_d}$.
These are indexed by~$ 1 \leq i_1 < i_2 < \cdots < i_d \leq n$.
 We use the~letter~$\psi$ because it represents quantum states 
  for~$d$ electrons in~$n$ spin-orbitals.
The star~$*$ denotes the~truncation to the~$d(n-d)+1$ coordinates
which satisfy~$i_{d-1} \leq d$. Our system
 (\ref{eq:CCeqns}) is a~truncation of the~eigenvalue problem for 
 the~symmetric~$\binom{n}{d} \times \binom{n}{d}$ matrix~$H$,
  which is the~Hamiltonian.
The number of constraints on~$\psi$ imposed by (\ref{eq:CCeqns})
 is equal to  the~dimension~$d(n-d)$ of~${\rm Gr}(d,n)$.
The set of complex solutions  is finite for generic~$H$. 
The cardinality of this set  is the
coupled cluster degree (CC degree). It
depends only on~$d$ and~$n$.

The Grassmannian is the~closure in~$\PP^{\binom{n}{d}-1}$ of the~image of the~parametrization map
$$  \gamma: X = [x_{ij}] \,\mapsto \,
\hbox{all~$\binom{n}{d}$ maximal minors of} \,\,\, [\,{\rm I}_d \, X] \,= 
\begin{footnotesize} \begin{bmatrix}
1 & 0 & \cdots & 0  & x_{1,d+1} & x_{1,d+2} & \cdots & x_{1,n} \\
0 & 1 & \cdots & 0  & x_{2,d+1} & x_{2,d+2} & \cdots & x_{2,n} \\
\vdots & \vdots & \ddots & \vdots & \vdots & \vdots & \ddots & \vdots \\
0 & 0 & \cdots & 1 &  x_{d,d+1} & x_{d,d+2} & \cdots & x_{d,n} \\
\end{bmatrix}\! .
\end{footnotesize}
$$
This maps the~$d \times (n-d)$ matrix~$X$ to all its minors. We view this as a rational map:
$$ \gamma \,:\, \PP^{d(n-d)} \,\dashrightarrow \,{\rm Gr}(d,n)\, \subset \,\PP^{\binom{n}{d}-1}.~$$
In fact, we define the map $\gamma$ on an affine subset $\mathbb{C}^{d(n-d)}$ and extend it to the whole $\mathbb{P}^{d(n-d)}$ by multiplying the matrix~$\mathrm{I}_d$ by an additional variable $t$.
The closure of the graph of the map~$\gamma$ is a~subvariety~$\mathcal{G}(d,n)$ of
dimension~$d(n-d)$ in
the product space $\PP^{d(n-d)} \times \PP^{\binom{n}{d}-1}$.
Its cone in the affine space $\A^{d(n-d) + 1} \times \A^{{n \choose d}}$
has dimension $d(n-d)+2$. In the following theorem we refer to the
degree of the associated projective variety of dimension $d(n-d)+1$.

\begin{theorem} \label{thm:graphmap}
The CC degree of the~Grassmannian~${\rm Gr}(d,n)$ is the total degree of the
graph~$\,\mathcal{G}(d,n)$, here thought of as the projectivization of its affine cone
in $\A^{d(n-d) + 1} \times \A^{{n \choose d}}$.
\end{theorem}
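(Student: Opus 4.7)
The plan is to realize CC solutions as the intersection of $\mathcal{G}(d,n)$ (in $\PP^{d(n-d)}\times\PP^{\binom{n}{d}-1}$) with the graph of a linear map, compute this intersection as a sum of bidegrees, and then identify the sum of bidegrees with the total degree of the projectivized affine cone.

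First I would observe that on the image of $\gamma$, the starred Pl\"ucker coordinates factor as $\gamma_{*}(y)=y_0^{d-1}\sigma(y)$, where $\sigma:\CC^{d(n-d)+1}\to\CC^{d(n-d)+1}$ is the linear isomorphism sending $y_0\mapsto\psi_{1\cdots d}$ and each $y_{ij}\mapsto\pm\psi_{1\cdots\widehat{i}\cdots d,\,j}$. Projectively this identifies the first factor $\PP^{d(n-d)}$ of the graph with the projectivization of the truncated Pl\"ucker coordinate vector $\psi_*$. The CC equation $(H\psi)_{*}=\lambda\psi_*$ is projectively equivalent to $[(H\psi)_*]=[\psi_*]$ in $\PP^{d(n-d)}$, with $\lambda$ recovered from the proportionality, and via the identification above it reads $[y]=[\tilde H\psi]$ on $\mathcal{G}(d,n)$, where $\tilde H:=\sigma^{-1}\circ H_{*}:\PP^{\binom{n}{d}-1}\dashrightarrow\PP^{d(n-d)}$ is a linear map, surjective for generic $H$. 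Consequently the set of CC solutions is in bijection with $\mathcal{G}(d,n)\cap\Gamma_{\tilde H}$, where $\Gamma_{\tilde H}$ is the graph of $\tilde H$.

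Next I would compute this intersection in the bigraded cohomology ring $\ZZ[h_1,h_2]/(h_1^{d(n-d)+1},h_2^{\binom{n}{d}})$. The graph of a generic surjective linear map $\PP^{\binom{n}{d}-1}\to\PP^{d(n-d)}$ has class $[\Gamma_{\tilde H}]=\sum_{i=0}^{d(n-d)} h_1^{i} h_2^{d(n-d)-i}$ with every bidegree equal to $1$. Writing $[\mathcal{G}(d,n)]=\sum_{i+j=\binom{n}{d}-1} c_{ij}\, h_1^i h_2^j$, the intersection number evaluates to $\sum_{i=0}^{d(n-d)} c_{i,\,\binom{n}{d}-1-i}$, i.e., the sum of all multidegrees of $\mathcal{G}(d,n)$. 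A Bertini-type argument establishes transversality for generic $H$, so the CC degree equals this sum.

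Finally, invoke the classical identity that for any subvariety $X\subset\PP^m\times\PP^n$ of dimension $k$, the sum $\sum c_{ij}$ of its bidegrees equals the degree of the projectivization of its affine cone in $\PP^{m+n+1}$. A quick Hilbert-series proof: the total-graded Hilbert polynomial of $X^{\mathrm{proj}}$ is $P_{X^{\mathrm{proj}}}(N)=\sum_{s+t=N}P_X(s,t)$, and the beta-function estimate $\sum_{u=0}^{N} u^i(N-u)^j\sim \tfrac{i!\,j!}{(k+1)!}\,N^{k+1}$ shows the leading coefficient of $P_{X^{\mathrm{proj}}}$ is $\tfrac{\sum c_{ij}}{(k+1)!}$, whence $\deg(X^{\mathrm{proj}})=\sum c_{ij}$. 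Combined with the previous step this proves the theorem. The main delicate point I anticipate is verifying that when $H$ is restricted to symmetric matrices the intersection $\mathcal{G}(d,n)\cap\Gamma_{\tilde H}$ remains transverse, avoids the locus $\{\psi_*=0\}$, and attains the generic cardinality --- this typically requires a dimension count on the symmetric subspace or exhibiting a single symmetric $H$ realizing the count.
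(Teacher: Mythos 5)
Your proposal is correct and reaches the same intersection-theoretic content as the paper, but the route is genuinely different in two places. Where you compute the cohomology class of the graph $\Gamma_{\tilde H}$ of a generic linear projection directly (showing each of its $d(n-d)+1$ bidegree components equals $1$) and then multiply against $[\mathcal{G}(d,n)]$ in $H^*(\PP^{d(n-d)}\times\PP^{\binom{n}{d}-1})$, the paper instead formulates a standalone Lemma (their Lemma~\ref{lem:degree}): the number of points of a $d$-dimensional $V\subset\PP^m\times\PP^n$ at which two generic $(d{+}1)$-tuples of linear forms become linearly dependent equals the total degree of $V$. Their proof is an explicit $\varepsilon$-degeneration: scaling $\ell_i\mapsto\varepsilon^i\ell_i$ splits the rank-one system at $\varepsilon=0$ into $d{+}1$ linear systems, each of which picks off one multidegree, and the Implicit Function Theorem carries these back to $\varepsilon>0$. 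Your cohomological computation is slicker and more standard; their degeneration is more self-contained and does double duty later in the paper (the same scaling trick reappears in their Remark on nondegeneracy). You also re-prove the identity (sum of bidegrees = degree of the projectivized affine cone) via the Hilbert-polynomial/Beta-function estimate, whereas the paper just cites Van der Waerden for it. Finally, you and the paper flag exactly the same delicate point --- transversality and reality when $H$ is required to be symmetric --- and neither fully resolves it; the paper defers it to a Remark, noting the CC degree is defined with multiplicities, which makes the theorem as stated immune to this issue. One small cosmetic mismatch: you set up $\Gamma_{\tilde H}$ as the graph of $\tilde H=\sigma^{-1}\circ H_*$ so the rank-one locus really is a graph of a linear map to the first factor; the paper keeps the symmetric formulation as the $2\times 2$ minors of $[(H\psi)_*\,,\,\xi]$ and absorbs the coordinate change into a left multiplication of $H$ by a block matrix --- these are the same move phrased differently.
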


This result will be explained and proved in~Section~\ref{sec2}.
Our approach is to argue that the~following
bilinear equations on~$ \PP^{d(n-d)} \times \PP^{\binom{n}{d}-1}$ are
equivalent to the~CC equations (\ref{eq:CCeqns}):
\begin{equation}
\label{eq:linsec}
 (H \psi)_* \,= \,\lambda\, \xi   \quad {\rm for} \quad (\xi,\psi) \in \mathcal{G}(d,n).
 \end{equation}
 The coordinates $\xi_{i_1i_2\cdots i_d}$ on $\PP^{d(n-d)}$ are indexed by $ 1 \leq i_1 < i_2 < \cdots < i_d \leq n$ with $i_{d - 1} \le d$. They correspond to the variables $x_{ij}$ such that $i = [d] \backslash \{i_1,\ldots, i_{d - 1}\}$ and $j = i_d$ when $i_d > d$. The coordinate $\xi_{[d]}$ corresponds to the additional variable $t$. The~solutions to the equation~(\ref{eq:linsec}) are counted by the~degree of the~graph~$\mathcal{G}(d,n)$.
In Section~\ref{sec3} we are focusing on~the~case~$d=2$. We  prove
the following formula, which appeared in~\cite[Conjecture~5.5]{FSS}.

\begin{theorem} \label{thm:conj55}
The CC degree of the~Grassmannian~${\rm Gr}(2,n)$ 
is equal to~$\, \frac{2}{n} \binom{2n-2}{n-1} - 1$.
\end{theorem}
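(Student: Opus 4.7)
By Theorem~\ref{thm:graphmap}, the CC degree equals the degree of the affine cone of $\mathcal{G}(2,n)$ projectivized in a common projective space. Because the singly-graded coordinate ring of this cone-projectivization is the total-degree re-grading of the bigraded coordinate ring of $\mathcal{G}(2,n) \subset \mathbb{P}^{2(n-2)} \times \mathbb{P}^{\binom{n}{2}-1}$, a routine diagonal Hilbert-polynomial computation yields
\[
\deg \overline{\mathcal{G}(2,n)} \;=\; \sum_{a+b \,=\, 2(n-2)} \deg_{a,b}\bigl(\mathcal{G}(2,n)\bigr).
\]
So the task is to compute and sum these multidegrees. The two extreme ones are immediate: $\deg_{2(n-2),0} = 1$ because the projection $\pi_1 : \mathcal{G}(2,n) \to \mathbb{P}^{2(n-2)}$ is birational by construction, and $\deg_{0,\,2(n-2)} = \deg \mathrm{Gr}(2,n) = \tfrac{1}{n-1}\binom{2n-4}{n-2}$ because $\pi_2 : \mathcal{G}(2,n) \to \mathrm{Gr}(2,n)$ is also birational.

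An intermediate multidegree $\deg_{a,b}$ is the degree in $\mathbb{P}^{\binom{n}{2}-1}$ of the image under $\gamma$ of a generic linear $b$-plane $L \subset \mathbb{P}^{2(n-2)}$, and its value is controlled by the base locus of $\gamma$, which is the $(n-2)$-dimensional Segre variety $\{t=0\} \cap \{\mathrm{rank}\,X \le 1\}$. To obtain these in closed form, I would write down the natural generating set for the ideal of $\mathcal{G}(2,n)$, namely the bilinear relations $\xi_{12}\,p_{ij} - \xi_{1i}\xi_{2j} + \xi_{1j}\xi_{2i} = 0$ (for $3 \le i < j \le n$) together with the Pl\"ucker relations on the $p$-variables, and then produce a monomial order under which the resulting Gr\"obner basis is squarefree. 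The associated toric/Stanley--Reisner degeneration preserves the multigraded Hilbert series, so each $\deg_{a,b}$ becomes a count of standard monomials, equivalently a lattice-point count in a polytope built from Dyck-path or Young-tableau data, parallel to classical toric degenerations of Grassmannians from representation theory.

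The final step is the identity $\sum_{a+b=2(n-2)} \deg_{a,b} = \tfrac{2}{n}\binom{2n-2}{n-1} - 1$. The main obstacle is precisely this combinatorial identity. Whereas the Catalan interpretation of $\deg_{0,2(n-2)}$ is classical and the naive Bezout guess $\deg_{a,b}=2^b$ is correct in the range $b < n-2$ where a generic $L$ misses the base locus, in the complementary range one must correct for the excess intersection of $L$ with the base locus, and this correction is what must be tracked cleanly through the Gr\"obner degeneration. The constant $-1$ in the formula presumably records a single distinguished object, either an exceptional component of $\pi_1$ over the base locus of $\gamma$ or one degenerate standard monomial, that must be subtracted from an otherwise balanced pair of Catalan counts.
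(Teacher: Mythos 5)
Your overall strategy --- pass to a squarefree initial ideal of $\mathcal{G}(2,n)$ via a Gr\"obner degeneration and then count facets of the resulting Stanley--Reisner complex --- is exactly the strategy of the paper. But your proposal stops precisely at the point where the real work begins, and in two places it sets off in a direction that would not succeed.

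First, your candidate generators $\xi_{12}\,\psi_{ij} - \xi_{1i}\xi_{2j} + \xi_{1j}\xi_{2i}$ are not bihomogeneous: the first term has bidegree $(1,1)$ while the other two have bidegree $(2,0)$. The bihomogeneous prime ideal of $\mathcal{G}(2,n)$ has no generators of bidegree $(2,0)$, because the map to the first factor $\PP^{2n-4}$ is the identity on the affine chart. The actual generators (Lemma~\ref{GrBasisG(2,n)}) are the Pl\"ucker trinomials in $\psi$ of bidegree $(0,2)$, the $2\times 2$ minors of the $2\times(2n-3)$ matrix whose rows are the $\psi$- and $\xi$-coordinates on pairs meeting $\{1,n\}$ (bidegree $(1,1)$ binomials), and certain polarized Pl\"ucker trinomials of bidegree $(1,1)$. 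Establishing that this explicit list is a reduced Gr\"obner basis under a carefully chosen reverse lexicographic order, with squarefree leading terms, is the first nontrivial ingredient, and your sketch only asserts that such a basis should exist without producing or verifying one.

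Second, you correctly identify the combinatorial identity $\sum \deg_{a,b} = \tfrac{2}{n}\binom{2n-2}{n-1}-1$ as ``the main obstacle,'' but then leave it unresolved. The paper does not compute the individual multidegrees $\deg_{a,b}$ at all. Instead it reads off the total degree as the number of facets of the Stanley--Reisner complex of the initial ideal, which it identifies with the number of maximal chains in an explicit poset $\mathrm{P}_{2,n}$: two copies of Young's poset on $\binom{[n]}{2}$ joined by cover relations $\xi_{ij} \to \psi_{ij}$ along the outer rim. The chain count $2C_{n-1}-1$ is then proved by a clean bijective argument: maximal chains transitioning on the upper (resp.\ lower) outer chain are each in bijection with maximal chains in Young's poset $\binom{[n+1]}{2}$, giving $C_{n-1}$ apiece, and the single overcounted chain is the one using $\xi_{1n}\to\psi_{1n}$. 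Your closing intuition that the $-1$ ``records a single distinguished object subtracted from a balanced pair of Catalan counts'' is precisely right, but recognizing that the right pair is $(C_{n-1}, C_{n-1})$ rather than $(C_{n-2}, C_{n-2})$, and exhibiting the bijections, is the heart of the argument and is absent from your sketch.

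In short: right framework, but the Gr\"obner basis, the poset model, and the chain-counting lemma are all missing, and those three items are the proof. The decomposition by multidegree you propose (with its $2^b$ regime and excess-intersection correction) is a plausible alternative route but is strictly harder, and you give no way to carry it through.
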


The proof of Theorem~\ref{thm:conj55} rests on a certain~Gr\"obner basis. Additionally we 
describe a Khovanskii basis that gives a toric degeneration of the graph. Both flavors of bases
are well-studied for Grassmannians, and we
lift them from~${\rm Gr}(2,n)$ to the~graph~$\mathcal{G}(2,n)$.
In Section~\ref{sec4} the~same approach is developed for~$d \geq 3$.
Here, it is essential to make judicious choices
of monomial orders, building on~recent advances 
on toric degenerations in
\cite{MohClarke, FFFM, KM, MAKHLIN2022105541, IgMal22}.

\smallskip

We conclude the~introduction with an example that illustrates some  of the~key ideas.

\begin{example}[$d=2,n=6$]
The prime ideal of the~Grassmannian~${\rm Gr}(2,6) \subset \PP^{14}$
is generated by the~$15$ Pl\"ucker quadrics 
$\,\psi_{il} \psi_{jk} - \psi_{ik} \psi_{jl}+\psi_{ij} \psi_{kl}    \,$
for~$1 \leq i < j < k < l \leq 6$. 
   For the graph~$\mathcal{G}(2,6) \subset \PP^8 \times \PP^{14}$, we augment the~Pl\"ucker ideal by the
$2 \times 2$ minors of the~$2 \times 9$~matrix 
\begin{equation}\label{rankonematrix}
\begin{bmatrix}
\psi_{12} & \psi_{13} & \psi_{14} & \psi_{15} & \psi_{16} & \psi_{23} & \psi_{24} & \psi_{25} & \psi_{26} \\
\xi_{12} & \xi_{13} & \xi_{14} & \xi_{15} & \xi_{16} & \xi_{23} & \xi_{24} & \xi_{25} & \xi_{26} \\
\end{bmatrix}.
\end{equation}
The bihomogeneous prime ideal of~$\mathcal{G}(2,6)$  is obtained by
saturating with respect to $\psi_{12}$.
This ideal is minimally generated by
the~$15$ Pl\"ucker quadrics of bidegree~$(0,2)$, and
$50$ quadrics of bidegree~$(1,1)$.
In addition to the~$36$ maximal minors of  (\ref{rankonematrix}), 
there are~$14$ polarized Pl\"ucker relations, like
$\psi_{14}\xi_{23}-\psi_{13}\xi_{24}+\psi_{34}\xi_{12}$. 
The entries of the~$ 2\times 4$ matrix~$X$ in~the~definition of~$\gamma$
serve as local coordinates. In these coordinates, the
matrix (\ref{rankonematrix}) is seen to have rank one:
$$ \begin{bmatrix} t\\ s \end{bmatrix} \cdot
\begin{bmatrix}
\,1 \!&\! x_{23} \!&\! x_{24} \!&\! x_{25} \!&\! x_{26} \! & \! -x_{13} \!&\! -x_{14} \!&\! -x_{15} \!&\! -x_{16}  \\
 \end{bmatrix}.~$$
By applying the~command {\tt multidegree} in~{\tt Macaulay2} to our bihomogeneus ideal,
 we find that the~class of  the~graph~$\mathcal{G}(2,6)$ in
the cohomology ring~$ H^*( \PP^8 \times \PP^{14}) = \ZZ[s,t]/\langle s^9, t^{15} \rangle$ is
$$ s^8 t^6 +  2s^7 t^7 + 4s^6 t^8 + 8s^5 t^9 + 12s^4 t^{10} + 14s^3 t^{11} + 14s^2 t^{12} + 14s t^{13} + 14 t^{14} .~$$
The total degree of the graph $\mathcal{G}(2,6)$ is the sum of the
coefficients of this binary form. This sum equals~$83$, so it agrees with the
  CC degree of~${\rm Gr}(2,6)$, by
\cite[Example 5.4]{FSS}.
The proof of Theorem~\ref{thm:conj55} in~Section~\ref{sec3} rests on~the~fact that the~$65$ 
ideal generators of~$\mathcal{G}(2,6)$ form~a Gr\"obner basis.
A combinatorial study of its squarefree leading monomials reveals the
Catalan number~$  42 = {\rm degree}({\rm Gr}(2,7)) $, which  is the~key ingredient for
${\rm degree}(\mathcal{G}(2,6)) =  2 \cdot  42-1$.
\hfill~$\diamond$ 
\end{example}

Using the~combinatorial and computational techniques
to be developed in~Section~\ref{sec4}, it is now possible to compute some
new CC degrees that go beyond those reported in~\cite{FSS}.

\begin{example} \label{ex:bigCC}
Here are three new values for the~census of CC degrees of Grassmannians:
$$ {\rm degree}( \mathcal{G}(3,9) ) = 574507, \;\; {\rm degree}( \mathcal{G}(3,10) ) = 9239646 \;
\text{  and  } \;
{\rm degree}( \mathcal{G}(4,9) ) = 10907231.~$$
These CC degrees are at most around $10^7$, which means that all complex solutions of
the corresponding CC equations can be found numerically with {\tt HomotopyContinuation.jl}~\cite{HomotopyContinuation}.
\end{example}

The code that verifies Example \ref{ex:bigCC}, along with other supplementary materials
for this article,  is made available at the repository website {\tt MathRepo}~\cite{mathrepo} of MPI-MiS
via the link 
\begin{equation}
\label{eq:mathrepo} \hbox{\url{https://mathrepo.mis.mpg.de/CCdegreeGrassmannian/}.} 
\end{equation}

\section{Intersecting the~graph}
\label{sec2}

The objective of this section is to prove Theorem~\ref{thm:graphmap}.
We need the~following general lemma.
Let~$V$ be any irreducible subvariety of dimension~$d$
in~$\PP^m \times \PP^n$. Fix the homogeneous coordinates
$\mathbf{x} = [x_0:\dots:x_m]$ and~$\mathbf{y} = [y_0:\dots:y_n]$ 
on the~two factors. For~$i=0,1,\ldots,d$ we consider 
generic linear forms~$\ell_i(\mathbf{x})$ 
 and~$m_i(\mathbf{y})$, defining hyperplanes in~$\PP^m$ and
~$\PP^n$ respectively.

\begin{lemma} \label{lem:degree}  The total~degree of the~variety~$V$
equals the~number of points~$(\mathbf{x},\mathbf{y})$ on~$V$ such~that
the vectors
$(\ell_0(\mathbf{x}),\ell_1(\mathbf{x}),\ldots,\ell_d(\mathbf{x}))$ and 
$(m_0(\mathbf{y}),m_1(\mathbf{y}),\ldots,m_d(\mathbf{y}))$
are linearly dependent.
\end{lemma}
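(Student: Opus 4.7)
The plan is to use the paper's explicit interpretation of the total degree of $V$ as the degree of the projectivized affine cone $\overline{V} \subset \PP^{m+n+1}$, which has dimension $d+1$. Its degree equals the number of intersection points with a generic codimension-$(d+1)$ linear subspace of $\PP^{m+n+1}$. Any $d+1$ generic linear forms on $\PP^{m+n+1}$ split as $L_i = \ell_i(\mathbf{x}) - m_i(\mathbf{y})$, $i=0,\ldots,d$, so counting $\overline{V} \cap \{L_0=\cdots=L_d=0\}$ is exactly what the lemma asks me to match with the linear-dependence count.

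The key geometric input is the natural $\PP^1$-fibration $\pi\colon \overline{V} \dashrightarrow V$, $[\mathbf{x}:\mathbf{y}] \mapsto ([\mathbf{x}],[\mathbf{y}])$. Over a point $P = ([\mathbf{x}_0],[\mathbf{y}_0]) \in V$ the fiber is the $\PP^1$ parametrized by $[\lambda:\mu] \mapsto [\lambda\mathbf{x}_0:\mu\mathbf{y}_0]$, since the cone $C(V) \subset \A^{m+1} \times \A^{n+1}$ is invariant under independent rescaling of the two blocks. Restricting $L_i$ to this fiber yields the linear equation $\lambda\,\ell_i(\mathbf{x}_0) - \mu\,m_i(\mathbf{y}_0) = 0$. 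The $d+1$ equations $L_0=\cdots=L_d=0$ admit a common (and then automatically unique) solution $[\lambda:\mu]$ on the fiber over $P$ precisely when the $2 \times (d+1)$ matrix with rows $(\ell_0(\mathbf{x}_0),\ldots,\ell_d(\mathbf{x}_0))$ and $(m_0(\mathbf{y}_0),\ldots,m_d(\mathbf{y}_0))$ has rank at most one, that is, when the two vectors in the lemma are linearly dependent. This sets up a bijection between $\overline{V} \cap \{L_0=\cdots=L_d=0\}$ and the points of $V$ where the linear-dependence condition holds.

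The main technical obstacle is to promote the set-theoretic bijection to an equality of cardinalities, free of multiplicities or boundary artifacts at $\lambda=0$ or $\mu=0$. For this I would observe that the linear-dependence locus $W \subset \PP^m \times \PP^n$ is the preimage of the diagonal of $\PP^d \times \PP^d$ under generically surjective linear maps $\phi \times \psi$ and hence has pure codimension $d$. Transversality of $V \cap W$ for generic choices of the forms $\ell_i,m_i$ then follows from Kleiman's transversality theorem applied to the transitive action of $\mathrm{GL}_{m+1} \times \mathrm{GL}_{n+1}$ on $\PP^m \times \PP^n$, which translates into moving the hyperplanes $L_i$ into general position relative to $\overline{V}$. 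Genericity further guarantees that none of the scalars $\ell_i(\mathbf{x}_0), m_i(\mathbf{y}_0)$ vanishes at the finitely many intersection points, so the unique $[\lambda:\mu] \in \PP^1$ lies in the open locus $\lambda,\mu \neq 0$. With these genericity statements in place, the bijection of the previous paragraph is an equality of counts, proving the lemma.
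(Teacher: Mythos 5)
Your proof is correct and takes a genuinely different route from the paper's. The paper stays inside $\PP^m\times\PP^n$: it writes the dependence condition as the vanishing of the $2\times 2$ minors $\ell_i(\mathbf{x})\,m_j(\mathbf{y}) - \ell_j(\mathbf{x})\,m_i(\mathbf{y})$, rescales $\ell_i\mapsto\varepsilon^i\ell_i$, and lets $\varepsilon\to 0$; the degenerate system $\ell_i(\mathbf{x})m_j(\mathbf{y})=0$ for $i<j$ decomposes into $d+1$ linear systems indexed by $k$, each cutting $V$ by a product of subspaces and thereby counting the coefficient of $s^{m-k}t^{n-d+k}$ in the multidegree $[V]$, and an implicit-function-theorem argument lifts these solutions back to small $\varepsilon>0$. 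Your argument instead passes directly to the projectivized affine cone $\overline{V}\subset\PP^{m+n+1}$ and reads off its degree through the rational $\PP^1$-fibration $\overline{V}\dashrightarrow V$: a generic codimension-$(d+1)$ linear section meets the $\PP^1$ over $P=([\mathbf{x}_0],[\mathbf{y}_0])$ in a (unique) point exactly when the $2\times(d+1)$ matrix with rows $\bigl(\ell_i(\mathbf{x}_0)\bigr)$ and $\bigl(m_i(\mathbf{y}_0)\bigr)$ has rank one. Your route is more conceptual and avoids both the $\varepsilon$-degeneration and the appeal to the implicit function theorem, at the cost of the genericity bookkeeping (Kleiman transversality, avoiding $\{\lambda\mu=0\}$ and the rank-zero locus) that you outline; the paper's route has the advantage of exhibiting the $d+1$ multidegree coefficients one by one, in the spirit of the van der Waerden reference it cites. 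One small sharpening of your last paragraph: rather than insisting each individual scalar $\ell_i(\mathbf{x}_0),m_i(\mathbf{y}_0)$ be nonzero, it is cleaner to note that $\overline{V}\cap\{\lambda=0\}$ and $\overline{V}\cap\{\mu=0\}$ are proper closed subvarieties of the irreducible $(d+1)$-dimensional $\overline{V}$, hence missed by a generic codimension-$(d+1)$ linear section, and that the rank-zero locus imposes $2(d+1)>d$ conditions on $V$, so is generically empty.
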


\begin{proof}
Recall that the~multidegree of~$V$ is the~binary form that represents the~cohomology class~$[V] \in H^*(\PP^m \times \PP^n) = \ZZ[s,t]/ \langle s^{m+1}, t^{n+1} \rangle~$. The~degree of~$V$ is the~sum of the~coefficients of the~multidegree~$[V]$, 
see e.g.~\cite{VANDERWAERDEN1978303}.
We consider the~$2 \times (d+1)$ matrix
whose rows are the~two vectors introduced above.
Its~$2 \times 2$ minors are the
bilinear forms~$\ell_i(\mathbf{x}) \cdot m_j(\mathbf{y}) - \ell_j(\mathbf{x})\cdot  m_i(\mathbf{y})$ 
for~$ 0 \leq i < j \leq d$. We need to count
their zeros on~$V$.
To this end, we introduce a~small real parameter~$\varepsilon >0$,
and we replace~$\ell_i(\mathbf{x})$ by
~$\varepsilon^i \cdot \ell_i (\mathbf{x})$ for all~$i$.
Then our equations become
$$\,\ell_i(\mathbf{x})\cdot  m_j(\mathbf{y})  \,=\, \varepsilon^{j-i} \cdot \ell_j(\mathbf{x}) \cdot m_i(\mathbf{y}) \,
\quad \hbox{ for~$ \,\,0 \leq i < j \leq d$}.~$$

When $\varepsilon$ approaches  zero, we obtain the~equations
$\, \ell_i(\mathbf{x})\cdot  m_j(\mathbf{y})  = 0 \,$ for~$\,\,0 \leq i < j \leq d$.
This system decomposes into~$d+1$ systems of~$d$ linear equations which are indexed by~$ k = 0,1,\ldots,d$.
Each of them amounts to intersecting~$V$ with a~product of subspaces
in~$\PP^m \times \PP^n$:
\begin{equation}
\label{eq:lmsystem}
 \ell_0 (\mathbf{x}) = \cdots = \ell_{k-1}(\mathbf{x}) \, = \, m_{k+1}(\mathbf{y}) = \cdots = m_d(\mathbf{y}) \,=\, 0 
 \quad \hbox{for~$k = 0,1,\ldots,d$.}
 \end{equation}
 This is a system of $d$ linear equations.
 Since the~linear forms~$\ell_i$ and~$m_j$ are generic, the system~\eqref{eq:lmsystem} has
  finitely many solutions~$(\mathbf{x},\mathbf{y})$ on the
  $d$-dimensional variety $V$, and each solution 
is nondegenerate. 
By the~Implicit Function Theorem, each solution extends from~${\varepsilon = 0}$ to a~small positive value~$\varepsilon_0 > 0$, and these are all
 solutions of the~original system for~$\varepsilon_0$.
 The~number of solutions to (\ref{eq:lmsystem}) is
a multidegree of~$V$, namely it is the~coefficient of~$s^{m-k} t^{n-d+k}$ in~the~binary form~$[V]$. By summing over all~$k$,
we obtain the total~degree of~$V$, as desired.
\end{proof}

\begin{proof}[Proof of Theorem~\ref{thm:graphmap}]
We shall apply Lemma~\ref{lem:degree} to the variety
$V := \mathcal{G}(d,n)$, which is the~graph of the~parametrization~$\gamma$ of
the Grassmannian~${\rm Gr}(d,n)$. This
$V$ lives in~$\PP^{d(n-d)} \times \PP^{\binom{n}{d}-1}$ and it
has dimension~$d(n-d)$. 
We denoted the~coordinates by~${\xi = \bf x}$ and
${\psi = \bf y}$, in~order to be consistent with \cite{FO, FSS}.
After eliminating~$\lambda$ from (\ref{eq:CCeqns}), 
the CC equations on~the~Grassmannian~${\rm Gr}(d,n)$ are the
$2 \times 2$ minors of the~two-column matrix
$\bigl[ (H \psi)_* \, , \, \psi_* \bigr]$.
The right column is equal to the~vector~$\xi$ on
the graph~$\mathcal{G}(d,n)$. We can thus write the
CC equations on~$\mathcal{G}(d,n)$ as the~$2 \times 2$ minors of
$\bigl[ (H \psi)_* \, , \, \xi \bigr]$.
This matrix has
two columns and~$d(n-d)+1$ rows.

Hence the~system of CC equations is precisely as in~Lemma~\ref{lem:degree},
where~$\ell_i(\xi)$ is the~$i$th coordinate of~$ \xi$ and
$m_j(\psi)$ denotes the~linear form given by the~$j$th row in~the~Hamiltonian~$H$.
The coordinate functions~$\ell_i(\xi) = \xi_i$ are not generic,
but we can replace them with generic linear forms by a~linear
change of coordinates on~$\PP^{d(n-d)}$, given by an invertible matrix~$M$ of size~$d(n-d)+1$.
This coordinate change can be compensated by multiplying the~Hamiltonian 
$H$ on~the~left by the~block matrix~$M^{-1} \oplus I_{\binom{n}{d}-d(n-d)-1}$,
which is square of size~$\binom{n}{d}$.
Since the Hamiltonian matrix~$H$ is generic, the resulting linear forms exhibit the~generic behavior
that is needed for  Lemma~\ref{lem:degree} to be applicable.
We conclude that the~number of complex solutions to the
CC equations (\ref{eq:CCeqns})
 is equal to the total degree of the variety~$V = \mathcal{G}(d,n)$.
\end{proof}

\begin{remark}
The definition of the~CC degree in~\cite{FSS}
counts the~number of solutions to the~CC equations with multiplicities. 
It still needs to be shown that there exists a~Hamiltonian~$H$ for which all solutions
are nondegenerate. Ideally, we want them to be all real. 
If we relax the requirement that
the square matrix $H$ should be symmetric, then this would follow from the
squarefree Gr\"obner basis promised in Conjecture \ref{thm:KhovanskiiPBW}. Namely, 
representing the monomial order by a weight vector,
the Gr\"obner degeneration corresponds to scaling the unknowns:
$$ \psi \rightarrow D_1(\varepsilon) \cdot \psi \quad {\rm and} \quad \xi \rightarrow D_2(\varepsilon) \cdot \xi .$$
Here $D_1$ and $D_2$ are diagonal matrices whose entries are different powers of the parameter $\varepsilon$. 
Our polynomial system on $\mathcal{G}(d,n)$ is now given by the $2 \times 2$ minors of
$\bigl[ (H D_1(\varepsilon) \psi)_* \, , \,  D_2(\varepsilon) \xi \bigr]$.

For $\varepsilon =0$, we are solving linear equations on an arrangement
of reduced linear subspaces, because the initial monomial ideal is radical.
Here, all  solutions are real and nondegenerate.
By the Implicit Function Theorem,  each solution extends
to a small $\varepsilon_0 > 0$. We now define  
$$D_3(\varepsilon_0) :=  D_2(\varepsilon_0)^{-1} \oplus I_{\binom{n}{d}-d(n-d)-1}\quad  \text{and} \quad H(\varepsilon_0) := D_3(\varepsilon_0) \cdot H \cdot D_1(\varepsilon_0).$$
With these matrices, 
our polynomial system is given by the $2 \times 2$ minors of~$\bigl[ (H(\varepsilon_0) \psi)_* \, , \,  \xi \bigr]$. Thus, using $H(\varepsilon_0)$ for the 
Hamiltonian, all solutions to the CC equations are real and nondegenerate.
At present we do not know how to replace $H(\varepsilon_0)$ by a symmetric matrix.
\end{remark}

\section{Catalan numbers}
\label{sec3}

The degree of the~Grassmannian~${\rm Gr}(2,n)$ is the Catalan number
$C_{n-2} = \frac{1}{n-1}\binom{2n-4}{n-2}$.
Theorem~\ref{thm:conj55} states that
the graph~$\mathcal{G}(2,n)$ has degree~$2 C_{n-1}-1$.
In this section we shall prove~this.

For the~proof it is convenient
to permute the~columns in~the~parametrization of 
${\rm Gr}(2,n)$. Namely, we redefine~$\gamma$
to be the~map that evaluates  the~$2 \times 2$ minors of
\begin{equation}
\label{eq:matrix2n}
  \begin{bmatrix}
1 & x_{1,2} & x_{1,3} & \dots & x_{1,n-1} & 0 \\
0 & x_{2,2} & x_{2,3} & \dots & x_{2,n-1} & 1 
\end{bmatrix}\! .
\end{equation}
As before,~$\mathcal{G}(2,n)$ is the~closure of the
graph of~$\gamma$. This is a~subvariety of dimension $2n-4$ in
$\PP^{2n-4} \times \PP^{\binom{n}{2}-1}$.
The prime ideal of~$\mathcal{G}(2,n)$ is the~kernel of the~ring homomorphism:
\begin{equation}
\label{eq:ringhomo2a} \begin{array}{rclccc}
\CC[\xi,\psi] & \rightarrow& \CC[s,t, \bf x],   &\xi_{1n}  \mapsto s, & \xi_{1i} \mapsto s  x_{2i}, 
& \xi_{jn}  \mapsto s  x_{1j},  \\ 
\psi_{ij}  & \mapsto &  t (x_{1i} x_{2j} - x_{1j} x_{2i}), &\psi_{1n}  \mapsto t, &\psi_{1i}  \mapsto t  x_{2i}, 
&\psi_{jn}  \mapsto t  x_{1j},
\end{array} \quad
1 < i < j < n. 
\end{equation}
The polynomial ring~$\CC[\xi,\psi]~$ has~$2n-3 + \binom{n}{2}$ variables.
We order these variables as follows:
\begin{equation}
\label{eq:termorder2n} \begin{matrix}
\xi_{12} < \psi_{12}\, < \,\xi_{13} < \psi_{13} \,<  \, \cdots \,<\,
\xi_{1n} < \psi_{1n} \,<\, \psi_{23} < \psi_{24} \,< \,\cdots \,<\, \psi_{2,n-1} \,<\, \xi_{2n} < \psi_{2n} \\
<\, \psi_{34} \,< \,\psi_{35} \,<\,
 \cdots \,<\, \psi_{n-2,n-1} \,<\, \xi_{n-2,n} < \psi_{n-2,n}  \,<\, \xi_{n-1,n}  < \psi_{n-1,n}.
\end{matrix}
\end{equation}
In words, we order the~variables~$\psi_{ij}$ by sorting their index pairs
lexicographically, and we then  insert each variable~$\xi_{1i}$ or~$\xi_{jn}$
right before the~$\psi$-variable with the~same index~pair.
We fix the~reverse lexicographic monomial order on~$\CC[\xi,\psi]$ that is induced by
this variable order.

\begin{lemma}\label{GrBasisG(2,n)}
The following
$(n-1)(n-2)(n^2+5n+12)/24$
 quadrics minimally  generate the~ideal of~$\,\mathcal{G}(2, n)$, and they form
 the~reduced Gr\"obner basis for 
 the~monomial order described above:
$$ \begin{array}{clll}
 \binom{n}{4} & {\rm trinomials} & 
    \underline{
    \psi_{il} \psi_{jk}} - \psi_{ik} \psi_{jl} + \psi_{ij} \psi_{kl} 
    &   {\rm for} \,\, 1 \leq i \! < \!  j \! <\! k \! <\! l \leq n, \smallskip \\ 
 \binom{n-1}{2} & {\rm binomials} & 
 \underline{ \psi_{in} \xi_{jn}} - \psi_{jn} \xi_{in} 
 & {\rm for} \,\, 1 \leq i < j < n, \smallskip \\
 (n-2)^2 & {\rm binomials} & 
 \underline{ \psi_{1j}\xi_{kn}} -  \psi_{kn} \xi_{1j}& 
 {\rm for} \,\,1 <\, j\,, \, k \, < n, \smallskip \\
 \binom{n-1}{2} & {\rm binomials} & 
 \underline{ \psi_{1k}\xi_{1l}} -  \psi_{1l}\xi_{1k} &
   {\rm for} \,\, 1 < k < l \leq n,  \smallskip \\
    \binom{n-2}{3} & {\rm trinomials} & 
\underline{ 
\xi_{1l} \psi_{jk}} - \xi_{1k} \psi_{jl} + \xi_{1j} \psi_{kl} &
{\rm for} \,\, 1 < j<k<l < n, \\
\binom{n-1}{3} & {\rm trinomials} & 
\underline{
\xi_{in} \psi_{jk}} - \xi_{jn} \psi_{ik} + \xi_{kn} \psi_{ij} & 
{\rm for} \,\, 1 \leq i < j < k <n.
    \end{array}
$$
\end{lemma}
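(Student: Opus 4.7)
The plan is to prove the lemma in three stages: ideal membership with identification of leading terms, Buchberger's criterion on $S$-pairs together with reducedness, and finally a Hilbert-function check confirming that the listed quadrics generate the full prime ideal rather than a proper subideal.

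For the first stage, I would verify that each of the six families lies in the kernel of the ring homomorphism \eqref{eq:ringhomo2a}. Family 1 already cuts out $\mathrm{Gr}(2,n)$ in the $\psi$-coordinates, so it vanishes on $\mathcal{G}(2,n)$. Families 2, 3, and 4 encode the rank-one relation displayed after (\ref{rankonematrix}): under \eqref{eq:ringhomo2a} both $\psi_{1i}$ and $\xi_{1i}$ evaluate to scalar multiples of $x_{2i}$, and both $\psi_{jn}$ and $\xi_{jn}$ evaluate to scalar multiples of $x_{1j}$, so the listed $2\times 2$ minors vanish. Families 5 and 6 are polarized Plücker trinomials, obtained from a classical Plücker relation by substituting one $\psi$-variable by its $\xi$-counterpart; this substitution is legal on $\mathcal{G}(2,n)$ precisely by the rank-one identity. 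Identifying the underlined monomials as the initial terms under \eqref{eq:termorder2n} is then a mechanical check: with the rule $\xi_{ij} < \psi_{ij}$ and index pairs sorted lexicographically, the underlined monomial in each quadric is the one carrying the two variables of largest rank.

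For the second stage, I would apply Buchberger's criterion. The product criterion disposes of most $S$-pairs because leading monomials across different families typically involve disjoint index pairs. The surviving pairs fall into a handful of combinatorial types. Within family 1 the reductions are the classical ones, as in the standard proof that the Plücker relations form a Gröbner basis under an antidiagonal term order. Within and across the binomial families 2-4, $S$-pairs behave like the $2\times 2$ minor relations of a rank-one matrix and reduce to zero directly. The mixed $S$-pairs between a polarized trinomial from family 5 or 6 and a binomial reduce through one further polarized trinomial of the other family; here the parallel roles of $\xi_{1j}$ and $\xi_{jn}$ in \eqref{eq:ringhomo2a} are essential. Reducedness is then immediate: the underlined monomials are pairwise non-dividing, and no trailing term in any listed quadric is divisible by another leading monomial.

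For the third stage, I would pass from the ideal they generate, call it $J$, to the prime ideal $I(\mathcal{G}(2,n))$. Because every listed leading term is squarefree, $\mathrm{in}(J)$ is a Stanley-Reisner ideal whose standard monomials admit an explicit combinatorial enumeration. Comparing this enumeration with the Hilbert function of the irreducible variety $\mathcal{G}(2,n)$ of dimension $2n-4$, whose multidegree is accessible from the geometry of $\gamma$ as in the $n=6$ example, forces $J = I(\mathcal{G}(2,n))$. The tally $(n-1)(n-2)(n^2+5n+12)/24$ is a direct summation over the six families, and minimality of the generating set is automatic since each listed quadric carries a distinct leading monomial. The principal obstacle I anticipate is the $S$-pair bookkeeping between families 5, 6 and the rank-one binomials 2, 3, 4: the shared indices produce several subcases, and the reductions must be routed through an intermediate polarized trinomial before vanishing, so careful indexing is required to ensure every reduction terminates cleanly.
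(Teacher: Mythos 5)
Your route diverges from the paper's in two substantive ways, and the second one creates a genuine gap. The paper never runs Buchberger's criterion. Instead it argues directly at the level of leading terms: for an arbitrary $f$ in the prime ideal $I(\mathcal{G}(2,n))$, the paper shows that $\mathrm{in}(f)$ must be divisible by one of the underlined monomials. The key device is the substitution $\xi \mapsto \psi$, which sends $I(\mathcal{G}(2,n))$ into $I(\mathrm{Gr}(2,n))$ and is compatible with the revlex order \eqref{eq:termorder2n}. This lets the paper piggyback on the known Gr\"obner basis for $\mathrm{Gr}(2,n)$ and handle the $|\mathbf{a}|>0$ cases by a cancellation argument, establishing $\mathrm{in}(I) \subseteq M$ directly, with no $S$-pair bookkeeping and no separate equality check $J = I$. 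Your plan, by contrast, only proves that the six families are a Gr\"obner basis for the ideal $J$ they generate, and must then close the gap $J = I$ afterwards.

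That closing step is where your proposal breaks down. You propose to compare the Hilbert function of $R/J$ (computable combinatorially from the squarefree initial ideal) with that of $R/I(\mathcal{G}(2,n))$, asserting that the multidegree of $\mathcal{G}(2,n)$ is ``accessible from the geometry of $\gamma$ as in the $n=6$ example.'' But in the $n=6$ example the paper computed the multidegree by \texttt{Macaulay2}, not by an independent geometric formula, and for general $n$ no such formula is given anywhere in the paper --- indeed Theorem~\ref{thm:conj55}, whose proof depends on this very lemma, is what ultimately pins down that number. Without an independent handle on the Hilbert function or degree of $I(\mathcal{G}(2,n))$, your third stage is circular. You would need either a genuinely independent multidegree computation (e.g.\ via Segre classes of the linear system defining $\gamma$, which is not in the paper and is nontrivial), or an argument that $J$ is prime of the right dimension (so that $J \subseteq I$ with both prime of equal dimension forces $J = I$), or you should replace the whole third stage with the paper's direct containment argument. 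A smaller point: your heuristic that the leading monomial is ``the one carrying the two variables of largest rank'' is not how revlex works --- among the three Pl\"ucker monomials, $\psi_{ij}\psi_{kl}$ contains the largest variable $\psi_{kl}$ yet is the trailing term; revlex selects the monomial that avoids the smallest variables.
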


\begin{proof}
We first check that the~six classes of quadrics
vanish on~$\mathcal{G}(2,n)$. The~$\binom{n}{4}$
trinomials in~$\psi$ are the~Pl\"ucker relations which generate
the ideal of~${\rm Gr}(2,n)$.
Next come three groups of binomials, for a~total of
$\binom{n-1}{2} + (n-2)^2 + \binom{n-1}{2} = \binom{2n-3}{2}$.
These are the~$2 \times 2$ minors of the~matrix 
whose two rows are the~$\psi$ variables and the~matching~$\xi$ variables.
This matrix is displayed in~(\ref{rankonematrix}) for~$n=6$.
It has rank one on~the~graph~$\mathcal{G}(2,n)$, as we can see from (\ref{eq:ringhomo2a}).
The last two groups are~$\binom{n-2}{3} + \binom{n-1}{3} = 
\binom{n}{4} - \binom{n-2}{4}$
 trinomials that are bilinear in~$\xi$ and~$\psi$.
These are obtained from the~Pl\"ucker relations by
applying the~$2 \times 2$ minors of the~previous matrix.
They are also mapped to zero under the~ring map (\ref{eq:ringhomo2a}) that describes
~$\mathcal{G}(2,n)$.

We next check that the~underlined monomials are the~leading monomials
with respect to the~reverse lexicographic monomial order 
that is induced by the~variable order (\ref{eq:termorder2n}).
Let~$M$ be the~ideal generated by these monomials. Since
the monomials are squarefree,~$M$ is a~radical ideal.
We have argued that~$M \subseteq \mathrm{in}(\mathcal{G}(2, n))$,
and we now need to show that equality holds.

Consider any monomial in~the~initial ideal of~$\mathcal{G}(2, n)$.
It is the~leading monomial~${\rm in}(f)$ of some
 bihomogeneous polynomial~$f(\xi,\psi)$ that vanishes
on~$\mathcal{G}(2,n)$.
We write~$f =
    \sum c_{\mathbf a, \mathbf b} \mathbf \xi^{\mathbf a} \mathbf \psi^{\mathbf b}$,
    so our polynomial has bidegree~$(|\mathbf a|, |\mathbf b|) \in \ZZ_{\geq 0}^2$. Note that~$|\mathbf b| > 0$ because the~images of the~$\xi$-variables under  the~ring map
    (\ref{eq:ringhomo2a}) are   algebraically independent.

First assume~$|\mathbf a| = 0$. Then~$f=\sum c_{\mathbf b} \mathbf \psi^{\mathbf b}$ lies in~the~ideal
of the~Grassmannian~${\rm Gr}(2,n)$. The~quadrics~$ \underline{
    \psi_{il} \psi_{jk}} - \psi_{ik} \psi_{jl} + \psi_{ij} \psi_{kl}~$
  are known to form a~Gr\"obner basis  for this ideal
  with respect to our monomial order.
  See e.g.~\cite[Theorem 5.8]{INLA} or \cite[Theorem 14.6]{CCA}.
     Therefore, the leading monomial~$\mathrm{in}(f)$ is a multiple of~$\psi_{il}\psi_{jk}$ for some~$1\leq i < j < k < l \leq n$.

Next suppose~$|\mathbf a| > 0$. 
We consider the~polynomial~$\sum c_{\mathbf{a}, \mathbf{b}} \mathbf \psi^{\mathbf{a} + \mathbf{b}}$
that is obtained from~$f$ by replacing each~$\xi_{ij}$ with the~corresponding~$\psi_{ij}$.
We see from    (\ref{eq:ringhomo2a}) that this polynomial
     vanishes on~$ \mathcal{G}(2, n)$ and hence on~${\rm Gr}(2,n)$.
We write the~leading monomial of~$f$ as follows:
$$ {\rm in}(f) \,\,= \,\,
\xi_{\sigma^{(1)}}^{a_1}\cdots \xi_{\sigma^{(h)}}^{a_h} \cdot \psi_{\tau^{(1)}}^{b_1}\cdots \psi_{\tau^{(r)}}^{b_r}.~$$
Here~$\sigma^{(i)}$ and~$\tau^{(j)}$ are index pairs.    
A key property  of our monomial order is that it respects the~substitution
$\xi \mapsto \psi$. Using this, we
    now examine the~leading monomial of~$\sum c_{\mathbf{a}, \mathbf{b}} \mathbf \psi^{\mathbf{a} + \mathbf{b}}~$.

First assume that the~leading monomial of~$\sum c_{\mathbf{a}, \mathbf{b}} \mathbf \psi^{\mathbf{a} + \mathbf{b}}~$ differs from the monomial
$$\psi_{\sigma^{(1)}}^{a_1}\cdots \psi_{\sigma^{(h)}}^{a_h} \cdot \psi_{\tau^{(1)}}^{b_1}\cdots \psi_{\tau^{(r)}}^{b_r}.$$ 
Then $f$ contains another  monomial~$m$ which cancels~${\rm in}(f)$ after the substitution~$\xi \mapsto \psi$.
Let~$\rho$ be the smallest index pair in the list~$ \sigma^{(1)},\ldots ,\sigma^{(h)}, \, \tau^{(1)},\ldots ,\tau^{(r)}$. 
From the cancellation and the definition of the reverse lexicographic order, we see that
$\rho = \tau^{(j)}$ for some~$j$. Hence~${\rm in}(f)$ contains~$\psi_\rho$ and
$m$ contains~$\xi_\rho$. In particular,~$\rho \cap \{1,n\} \not=  \varnothing$.
 This implies that~${\rm in}(f)$ is divisible
          by one of the~underlined leading monomials of the~binomials in~our list.

Next we suppose that $\psi_{\sigma^{(1)}}^{a_1}\cdots \psi_{\sigma^{(h)}}^{a_h} \cdot \psi_{\tau^{(1)}}^{b_1}\cdots \psi_{\tau^{(r)}}^{b_r}$ is the~leading monomial of~$\,\sum c_{\mathbf{a}, \mathbf{b}} \psi^{\mathbf{a} + \mathbf{b}}$,
which is in~the~ideal of~${\rm Gr}(2,n)$. This leading monomial is divisible
by~$\psi_{il} \psi_{jk}$ for some indices~$1\leq i < j < k < l \leq n$.
Since~$\{j,k\} \cap \{1, n\} = \varnothing$,
the corresponding factor  of~${\rm in}(f)$ is either
$\psi_{il} \psi_{jk}$ or~$\xi_{il} \psi_{jk}$.
Both of these occur among the
 underlined monomials in~our list.
 We conclude that~${\rm in}(f)$ is in~the~ideal generated by the
 underlined leading monomials. 
 
 We have proved that our list of quadratic binomials and trinomials is a~Gr\"obner basis
 for~$\mathcal{G}(2,n)$.
 To see that it is a~reduced Gr\"obner basis, one checks no trailing term
 occurs among the~leading monomials. Since all polynomials have the~same
 degree two, it follows that they minimally generate the~ideal.
 This completes the~proof of Lemma~\ref{GrBasisG(2,n)}.
\end{proof}

Let~$M$ be the~ideal generated by the~underlined monomials in~Lemma~\ref{GrBasisG(2,n)}.
We have shown that~$M$ is the~initial ideal of the~graph~$\mathcal{G}(2,n)$
with respect to the~monomial order we defined. This means that
the degree of~$\mathcal{G}(2,n)$ is equal to the~degree of~$M$.
We shall prove Theorem~\ref{thm:conj55} by showing that~$M$ has
degree~$2 C_{n-1}-1$. This will be done by constructing a
combinatorial model for the~simplicial complex  that is
represented by the~squarefree monomial ideal~$M$.

We begin with Young's poset on~$\binom{[n]}{2}$.
The elements in~this poset are the~variables~$\psi_{ij}$
with~$1 \leq i < j \leq n$, and the~order relation~$\preceq$ is defined by setting
~$\psi_{ij} \preceq \psi_{kl}$
if and only if~$i \leq k$ and~$j \leq l$.
The incomparable pairs are precisely the~leading monomials
$\psi_{il} \psi_{jk}$ for the~Grassmannian~${\rm Gr}(2,n)$.
The simplices in~the~associated
simplicial complex are the~chains in~the~poset. The
degree is the~number of maximal chains, which is
the Catalan number~$C_{n-2}$.

We define the~{\em outer chain} in~Young's poset to be the~following maximal chain:
$$ \psi_{12}  \preceq \psi_{13} \preceq  \cdots \preceq \psi_{1,n-1} 
\,\preceq \psi_{1n}\, \preceq \psi_{2n} \preceq \psi_{3n} \preceq \cdots \preceq \psi_{n-1,n}.~$$
The chain from~$\psi_{12}$ up to~$\psi_{1n}$ is called
the {\em lower outer chain}, and the~chain from~$\psi_{1n}$ up to~$\psi_{n-1,n}$ is
 the~{\em upper outer chain}. We now take another copy of
Young's poset, indexed by variables~$\xi_{ij}$, using artificial variables when $\{i,j\} \cap \{1,n\} = \varnothing$.
We link the $\xi$-poset together with the $\psi$-poset by the outer chains.
To be precise, we define a~poset~$\,{\rm P}_{2,n}$ on~$n(n-1)$ elements by
taking the~union of the~two Young's posets together with
the additional cover relations
~$\xi_{ij} \to \psi_{ij}$ whenever~$\{i,j\} \cap \{1,n\} \neq \varnothing$.

\begin{example}($n = 4$) 
    The~poset~${\rm P}_{2,4}$ has $12$ elements. Its Hasse diagram is shown in Figure~\ref{figure:psixiPoset}.
    The number of maximal chains of~${\rm P}_{2,4}$ equals
$9 = 2 \cdot 5 - 1$, which is the~CC degree of~${\rm Gr}(2,4)$.
\begin{figure}[h]
    \begin{center}
    \begin{tikzpicture}[scale=1]
    \draw[line width=15pt, red!30,opacity=0.2, rounded corners=15pt] (2.25,-0.25) -- (1,1) -- (-0.25,2.25)  -- cycle;
    \draw[line width=15pt, red!30,opacity=0.2, rounded corners=15pt] (2.25,0.75) -- (1,2) -- (-0.25,3.25)  -- cycle;
     \draw[line width=15pt, blue!30,opacity=0.2, rounded corners=15pt] (-0.25,1.75) -- (1,3) -- (2.25,4.25)  -- cycle;
    \draw[line width=15pt, blue!30,opacity=0.2, rounded corners=15pt] (-0.25,2.75) -- (1,4) -- (2.25,5.25)  -- cycle;
      \node (12) at (2,0) {$\xi_{12}$};
      \node (13) at (1,1) {$\xi_{13}$};
      \node (14) at (0,2) {$\xi_{14}$};
      \node (23) at (2,2) {$\xi_{23}$};
      \node (24) at (1,3) {$\xi_{24}$};
      \node (34) at (2,4) {$\xi_{34}$};
      \node (q12) at (2,1) {$\psi_{12}$};
      \node (q13) at (1,2) {$\psi_{13}$};
      \node (q14) at (0,3) {$\psi_{14}$};
      \node (q23) at (2,3) {$\psi_{23}$};
      \node (q24) at (1,4) {$\psi_{24}$};
      \node (q34) at (2,5) {$\psi_{34}$};
      \draw (12) -- (13) -- (14) -- (24) -- (34);
      \draw     (13) -- (23) -- (24);
      \draw (q12) -- (q13) -- (q14) -- (q24) -- (q34);
      \draw     (q13) -- (q23) -- (q24);
      \draw (12) -- (q12);
      \draw (13) -- (q13);
      \draw (14) -- (q14);
      \draw (24) -- (q24);
      \draw (34) -- (q34);
    \end{tikzpicture}    
    \end{center} \vspace{-0.15in}
    \caption{\label{figure:psixiPoset} 
    The diagram highlights
        the two outer chains in the poset ${\rm P}_{2,4}$: the lower outer chains are pink, 
    and the upper outer chains are purple. The~outer chain in~the~$\xi$-poset consists of variables in~our polynomial ring~$\CC[\xi,\psi]$.
    The~label~$\xi_{23}$ is \underbar{not} a~coordinate on~$\mathcal{G}(2,4)$.}
\end{figure}
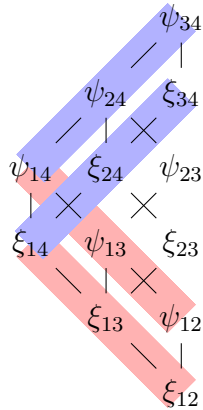
\end{example}

\begin{lemma} \label{lem:chains}
    The~number of maximal chains in~the~poset~${\rm P}_{2,n}$ equals~$2 \cdot C_{n-1}-1 = \frac{2}{n} \binom{2n-2}{n-1}-1$.    
\end{lemma}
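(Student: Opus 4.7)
The plan is to decompose each maximal chain of ${\rm P}_{2,n}$ by the unique outer element at which it transitions from the $\xi$-layer to the $\psi$-layer. Write $\gamma(u,v)$ for the number of maximal chains in the interval $[u,v]$ of Young's poset on $\binom{[n]}{2}$. Since the only cover relations connecting the two layers are the edges $\xi_o \to \psi_o$ for outer $o = (\alpha,\beta)$, each maximal chain of ${\rm P}_{2,n}$ is uniquely specified by a triple (chain from $\xi_{12}$ to $\xi_o$ in the $\xi$-layer, the transition edge, chain from $\psi_o$ to $\psi_{n-1,n}$ in the $\psi$-layer). Letting $O$ denote the outer chain, this gives
$$
a_n \,:=\, \#\{\text{max chains of } {\rm P}_{2,n}\} \,=\, \sum_{o \in O} \gamma((1,2),\, o) \cdot \gamma(o,\, (n-1,n)).
$$

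Next I would split $O$ into the lower outer chain $\{(1,j) : 2 \le j \le n\}$ and the upper outer chain $\{(i,n) : 1 \le i \le n-1\}$, with overlap at $(1,n)$. These sub-chains are themselves saturated in Young's poset, so $\gamma((1,2),(1,j)) = \gamma((i,n),(n-1,n)) = 1$, and the formula reduces to
$$
a_n \,=\, 1 \,+\, \sum_{j=2}^{n-1} \gamma((1,j),(n-1,n)) \,+\, \sum_{i=2}^{n-1} \gamma((1,2),(i,n)).
$$
Applying the antiautomorphism $(i,j)\mapsto(n+1-j,n+1-i)$ of Young's poset, which sends $[(1,j),(n-1,n)]$ isomorphically to $[(1,2),(n+1-j,n)]$, proves $\gamma((1,j),(n-1,n)) = \gamma((1,2),(n+1-j,n))$; re-indexing then identifies the two sums, yielding $a_n = 1 + 2\sum_{i=2}^{n-1} \gamma((1,2),(i,n))$.

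The core combinatorial step is the Catalan identity $\sum_{i=1}^{n-1}\gamma((1,2),(i,n)) = C_{n-1}$. Under the lattice-path encoding, a maximal chain from $(1,2)$ to $(i,n)$ corresponds to a monotone R/U-path from $(0,0)$ to $(i-1,n-2)$ staying in the Dyck region $r \le u$. I would prove the identity by decomposing any Dyck path of length $2(n-1)$ from $(0,0)$ to $(n-1,n-1)$ according to the horizontal position $k$ of its \emph{last} U-step: the Dyck condition forces $0 \le k \le n-2$, and after this U-step the path must consist of $n-1-k$ forced R-steps. Summing over $k$ yields $C_{n-1} = \sum_{k=0}^{n-2}\gamma((1,2),(k+1,n)) = \sum_{i=1}^{n-1}\gamma((1,2),(i,n))$. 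Since $\gamma((1,2),(1,n)) = 1$, the truncated sum equals $C_{n-1}-1$ and we conclude $a_n = 1 + 2(C_{n-1}-1) = 2C_{n-1} - 1 = \tfrac{2}{n}\binom{2n-2}{n-1}-1$.

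The hard part is recognizing the right Dyck-path decomposition, since this is what matches the sum produced by the transition-vertex/antiautomorphism manipulation. Everything else is routine bookkeeping with the poset structure once the transition vertex has been singled out.
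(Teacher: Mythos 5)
Your proof is correct. It shares the key organizing idea with the paper's argument---single out the unique edge $\xi_o \to \psi_o$ where a maximal chain of ${\rm P}_{2,n}$ crosses from the $\xi$-layer to the $\psi$-layer---but the subsequent bookkeeping goes a different way. The paper groups the transitions only into two families (upper outer chain vs.\ lower outer chain) and then produces, for each family, a clean bijection onto the maximal chains of Young's poset on $\binom{[n+1]}{2}$: relabel $\psi_{in}$ as $\xi_{i,n+1}$ and append a top element $\xi_{n,n+1}$. This gives $C_{n-1}$ for each family immediately, and the overlap (the chain through $\xi_{1n}\to\psi_{1n}$) is subtracted once, yielding $2C_{n-1}-1$ without any separate Catalan identity. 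You instead refine the count by the exact transition vertex $o$, observe that the interval from $\xi_{12}$ to an outer $\xi_o$ (and dually from an outer $\psi_o$ to $\psi_{n-1,n}$) is totally ordered, apply the poset antiautomorphism $(i,j)\mapsto(n+1-j,n+1-i)$ to merge the two resulting sums, and then establish $\sum_{i=1}^{n-1}\gamma((1,2),(i,n)) = C_{n-1}$ from scratch via a ``last $U$-step'' decomposition of Dyck paths. Both routes are valid; the paper's bijection is shorter and self-contained at the level of posets, while your version makes the refined distribution of chain counts over transition vertices explicit and isolates the underlying Catalan identity, which could be useful if one wanted finer statistics than the raw total.
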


\begin{proof}
   Every maximal chain in~${\rm P}_{2,n}$ transitions 
   precisely once from the~$\xi$-poset to the~$\psi$-poset.
   It therefore
      contains a~unique cover relation~$\xi_{ij} \to \psi_{ij}$
   where~$\{i,j\} \cap \{1,n\} \not= \varnothing$.
   We distinguish the~two cases when the~chain contains~$\xi_{in} \to \psi_{in}$ and when it contains~$\xi_{1j} \to \psi_{1j}$. 

    First consider maximal chains that move up to~$\psi$ on~the~upper outer chain.
    These are the chains containing a~cover relation~$\xi_{in} \to \psi_{in}$.  Such chains are in~bijection with maximal chains in~Young's poset~$\binom{[n+1]}{2}$, so their number is~$C_{n-1}$.
     This bijection is seen by replacing the~labels~$\psi_{in}$ with~$\xi_{i,n + 1}$ for~$i=1,2,\ldots,n-1$,
        and by adding a~new top element~$\xi_{n,n+1}$.
      
        Next we consider maximal chains in~${\rm P}_{2,n}$ that transition from~$\xi$ to~$\psi$
        on~the~lower outer chain. Such maximal chains contain a~unique cover relation~$\xi_{1j} \to \psi_{1j}$. These maximal chains correspond to
     maximal chains in~the~first case by reversing the~order of the~poset and switching~$\psi$ and~$\xi$ variables. 
     Hence there are also~$C_{n-1}$ maximal chains in~the~second class.

There is precisely one maximal chain that is covered by both cases.
This is the~chain which uses the~transition~$    \xi_{1n} \to \psi_{1n}$. 
This is the~concatenation of the~lower outer~$\xi$-chain
and the~upper outer~$\psi$-chain. This chain is counted twice, 
so we subtract one in~our total count.
We conclude that the~number of
    maximal chains in~the~poset~${\rm P}_{2,n}$ equals~$2 C_{n-1}-1$.
\end{proof}

\begin{proof}[Proof of Theorem~\ref{thm:conj55}]
We now change the~labels of the~poset~${\rm P}_{2,n}$
by replacing~$\xi_{ij}$ with~$\psi_{ij}$ whenever
$\{i,j\} \cap \{1,n\} =  \varnothing$. Since such pairs 
$\xi_{ij}$ and~$\psi_{ij}$ are incomparable, each maximal chain in~${\rm P}_{2,n}$ with the~new labeling 
consists of~$2n-2$ distinct variables in~our coordinate ring~$\CC[\xi,\psi]$.
These maximal chains are the~facets in~a~pure simplicial
complex of dimension~$2n-3$ on~$2n-3 + \binom{n}{2}$ vertices.
By Lemma~\ref{lem:chains}, the~number of facets is equal to~$2 C_{n-1}-1$.

Consider now the~ideal~$M$ which is generated
by the~underlined monomials in~Lemma~\ref{GrBasisG(2,n)}.
These squarefree quadratic monomials are
precisely the~incomparable pairs in~${\rm P}_{2,n}$
that have distinct labels after the~relabeling. Hence
$M$ is the~Stanley-Reisner ideal of our pure
simplicial complex. The~degree of~$M$
is the~number of facets, which is~$2 C_{n-1}-1$.
This is therefore the~degree of~$\mathcal{G}(2,n)$,
which equals  the~CC degree of~${\rm Gr}(2,n)$
by Theorem~\ref{thm:graphmap}.
\end{proof}

We have proved \cite[Conjecture 5.5]{FSS} by means of a
Gr\"obner degeneration. This replaces the irreducible variety~$\mathcal{G}(2,n)$
by a union of~$2 C_{n-1}-1$ coordinate subspaces in
$\PP^{2n-4} \times \PP^{\binom{n}{2}-1}$.
In what follows, we describe
a  toric degeneration that underlies
our Gr\"obner degeneration.
Recall that toric degenerations  \cite{LB, MohClarke, FFFM}
are described algebraically by
Khovanskii basis \cite{KM}.

We fix a monomial order on the polynomial ring
$\CC[s,t,{\bf x}]$ which selects the diagonal monomial~$x_{1i} x_{2j}$ to be the leading monomial for the each of the~$2 \times 2$ minors of
(\ref{eq:matrix2n}).
Let~$\mathcal{T}(2,n)$ denote the toric variety in
$\PP^{2n-4} \times \PP^{\binom{n}{2}-1}$ which is obtained by 
passing to these  leading monomials in (\ref{eq:ringhomo2a}).
Equivalently,
the toric ideal of~$\mathcal{T}(2,n)$ is the kernel of ring homomorphism
\begin{equation}
\label{eq:ringhomo2b} \begin{array}{rclccc}
\CC[\xi,\psi] & \rightarrow& \CC[s,t, \bf x],   &\xi_{1n}  \mapsto s, & \xi_{1i} \mapsto s \cdot x_{2i}, 
& \xi_{jn}  \mapsto s \cdot  x_{1j},  \\ 
\psi_{ij}  & \mapsto &  t \cdot x_{1i} x_{2j}, &\psi_{1n}  \mapsto t, &\psi_{1i}  \mapsto t \cdot x_{2i}, 
&\psi_{jn}  \mapsto t \cdot x_{1j},
\end{array} \quad
\text{for }1 < i < j < n. 
\end{equation}

\begin{proposition}\label{Thm:Khovanskii(2,n)}
The minors in (\ref{eq:ringhomo2a}) form a Khovanskii basis with respect to the diagonal monomial order.
Hence the toric variety~$\mathcal{T}(2,n)$ is a toric degeneration of the graph~$\mathcal{G}(2,n)$.
\end{proposition}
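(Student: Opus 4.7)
My plan is to apply the Kaveh-Manon criterion for Khovanskii bases \cite{KM}. Write $\phi$ for the ring homomorphism in (\ref{eq:ringhomo2a}) and $\phi_0$ for the one in (\ref{eq:ringhomo2b}). Fix a weight vector $\omega$ on $\CC[s,t,\mathbf{x}]$ that refines the diagonal monomial order (for instance $\omega(x_{1i})=n-i$ and $\omega(x_{2i})=\omega(s)=\omega(t)=0$), and let $\tilde\omega$ be the induced weight on $\CC[\xi,\psi]$ given by $\tilde\omega(y)=\omega(\phi(y))$ on each variable~$y$. By the Kaveh-Manon criterion, the images $\phi(y)$ form a Khovanskii basis for $A=\mathrm{image}(\phi)$ precisely when $\mathrm{in}_{\tilde\omega}(\ker\phi)=\ker\phi_0$, and the associated flat family then realizes $\mathcal{T}(2,n)$ as the claimed toric degeneration of $\mathcal{G}(2,n)$.

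The first step is to verify $\phi_0(y)=\mathrm{in}_\omega(\phi(y))$ for every variable~$y$. All $\xi$'s and the $\psi$'s whose index pair meets $\{1,n\}$ map to monomials and the check is immediate; for the remaining $\psi_{ij}$ with $1<i<j<n$, the image $t(x_{1i}x_{2j}-x_{1j}x_{2i})$ has $\omega$-leading term $tx_{1i}x_{2j}=\phi_0(\psi_{ij})$ by the choice of diagonal order.

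The second step is to compute the $\tilde\omega$-initial forms of the six families of generators in Lemma~\ref{GrBasisG(2,n)}. Every Pl\"ucker trinomial $\psi_{il}\psi_{jk}-\psi_{ik}\psi_{jl}+\psi_{ij}\psi_{kl}$ has its first two monomials tied at $\tilde\omega$-weight $\omega(t^2 x_{1i}x_{1j}x_{2k}x_{2l})$, strictly greater than the third (since $j<k$ forces $\omega(x_{1j}x_{2k})>\omega(x_{1k}x_{2j})$), so its $\tilde\omega$-initial form is the binomial $\psi_{il}\psi_{jk}-\psi_{ik}\psi_{jl}$. The same collapse handles both families of polarized $\xi\psi$-trinomials. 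The three families of $\xi\psi$-binomials already sit at a single $\tilde\omega$-weight and so equal their own initial forms. A direct calculation shows that each resulting binomial lies in $\ker\phi_0$; writing $J$ for the ideal generated by these binomials, this step yields $J\subseteq\mathrm{in}_{\tilde\omega}(\ker\phi)\subseteq\ker\phi_0$.

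The main obstacle is to upgrade the inclusion $\mathrm{in}_{\tilde\omega}(\ker\phi)\subseteq\ker\phi_0$ to equality. My approach is a bigraded Hilbert series comparison. Both $\phi$ and $\phi_0$ are bihomogeneous with respect to the $(\xi,\psi)$-bigrading on $\CC[\xi,\psi]$ and the $(s,t)$-bigrading on $\CC[s,t,\mathbf{x}]$. Since the $\xi$'s are algebraically independent under $\phi$ (equivalently under $\phi_0$, which agrees with $\phi$ on $\CC[\xi]$), and the classical Pl\"ucker SAGBI theorem gives equal Hilbert series for the Pl\"ucker algebra of $\mathrm{Gr}(2,n)$ and its diagonal toric degeneration, the subalgebras $A=\mathrm{image}(\phi)$ and $A_0=\mathrm{image}(\phi_0)$ have identical bigraded Hilbert series. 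Combined with flatness of the Gr\"obner degeneration associated to $\tilde\omega$ (which preserves Hilbert series), the bihomogeneous quotients $\CC[\xi,\psi]/\mathrm{in}_{\tilde\omega}(\ker\phi)$ and $\CC[\xi,\psi]/\ker\phi_0$ share a single bigraded Hilbert series, and the containment forces $\mathrm{in}_{\tilde\omega}(\ker\phi)=\ker\phi_0$. The Khovanskii basis statement then follows from \cite{KM}, and the one-parameter family defined by the weight $\tilde\omega$ realizes $\mathcal{T}(2,n)$ as the desired toric degeneration of $\mathcal{G}(2,n)$.
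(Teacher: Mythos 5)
Your approach via the initial-ideal criterion is a legitimate route in principle, and your Steps 1 and 2 are fine: the verification that $\phi_0(y)=\mathrm{in}_\omega(\phi(y))$ on all variables, and the computation showing $J\subseteq\mathrm{in}_{\tilde\omega}(\ker\phi)\subseteq\ker\phi_0$, are both correct. However, the final step is circular. The inequality $\dim (A_0)_{a,b}\le \dim A_{a,b}$ holds automatically (because $A_0\subseteq\mathrm{in}_\omega(A)$ and passing to initial forms preserves dimensions of graded pieces), and \emph{equality of the bigraded Hilbert series of $A$ and $A_0$ is precisely the Khovanskii basis property you are trying to prove.} The justification you offer---that the $\xi$'s are algebraically independent and the Pl\"ucker SAGBI theorem gives equal Hilbert series for $\mathrm{Gr}(2,n)$ and its toric degeneration---does not establish it. Writing $A_{a,b}=s^at^b\,V_a\cdot W_b$ and $(A_0)_{a,b}=s^at^b\,V_a\cdot W_b^0$, where $V_a$ is spanned by the products of $a$ of the $\xi$-monomials and $W_b,W_b^0$ by products of $b$ Pl\"ucker coordinates (resp.\ their leading monomials), you do have $\dim W_b=\dim W_b^0$. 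But the $\xi$-monomials and the Pl\"ucker coordinates live in the \emph{same} variables $\mathbf{x}$, so the multiplication $V_a\otimes W_b\to V_aW_b$ need not be injective, and $\dim V_aW_b=\dim V_aW_b^0$ does not follow from $\dim W_b=\dim W_b^0$ for a monomial space $V_a$ in general. That equality \emph{is} the Khovanskii basis property, so the Hilbert-series comparison assumes what it sets out to prove.

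What is missing is a proof that the binomials $J$ actually generate $\ker\phi_0$. If you had that, you would be done without any Hilbert series argument, since $J\subseteq\mathrm{in}_{\tilde\omega}(\ker\phi)\subseteq\ker\phi_0=J$ forces all three to coincide. The paper's proof supplies exactly this ingredient: it runs the Gr\"obner-basis argument of Lemma~\ref{GrBasisG(2,n)} again for the monomial map $\phi_0$, showing that the six families of binomials form a Gr\"obner basis (hence a generating set) of $\ker\phi_0$, and then observes that each such binomial lifts to a generator of $\ker\phi$ by adding a single trailing term---which is the standard lifting criterion for SAGBI/Khovanskii bases. Your Step~2 is essentially that lifting, run backwards; the step you skipped is the one the paper spends its effort on.
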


\begin{proof}
The~$2 \times 2$ minors of (\ref{eq:matrix2n}) form
a Khovanskii basis of the Grassmannian~${\rm Gr}(2,n)$;
see e.g.~\cite[Section 14.3]{CCA}.
The toric degeneration is given by the
Gel'fand-Tsetlin polytope ${\rm GT}(2,n)$.
Our variety~$\mathcal{T}(2,n)$ is the graph of the associated monomial parametrization.

By the same argument  as in  Lemma \ref{GrBasisG(2,n)},
the kernel of (\ref{eq:ringhomo2b}) has the following Gr\"obner basis:
$$ \begin{small} \begin{array}{clll}
 \binom{n}{4} & {\rm binomials} & 
    \underline{\psi_{il} \psi_{jk}}- \psi_{ik} \psi_{jl} 
    &   {\rm for} \,\, 1 \leq i \! < \!  j \! <\! k \! <\! l \leq n, \smallskip \\ 
 \binom{n-1}{2} & {\rm binomials} & 
   \underline{\psi_{in} \xi_{jn}} - \psi_{jn} \xi_{in} 
 & {\rm for} \,\, 1 \leq i < j < n, \smallskip \\
 (n-2)^2 & {\rm binomials} & 
 \underline{\psi_{1j}\xi_{kn}} -  \psi_{kn} \xi_{1j}& 
 {\rm for} \,\,1 <\, j\,, \, k \, < n, \smallskip \\
 \binom{n-1}{2} & {\rm binomials} & 
  \underline{\psi_{1k}\xi_{1l}} -  \psi_{1l}\xi_{1k} &
   {\rm for} \,\, 1 < k < l \leq n,  \smallskip \\
    \binom{n-2}{3} & {\rm binomials} & 
\underline{\xi_{1l} \psi_{jk}} - \xi_{1k} \psi_{jl}  &
{\rm for} \,\, 1 < j<k<l < n, \\
\binom{n-1}{3} & {\rm binomials} & 
\underline{\xi_{in} \psi_{jk}} - \xi_{jn} \psi_{ik} & 
{\rm for} \,\, 1 \leq i < j < k <n.
    \end{array} \end{small}
$$
Each of these binomials lifts to a relation on~$ \mathcal{G}(2,n)$,
by  adding one trailing term to those in the
first, fifth and sixth group.
This yields the Khovanskii basis property.
\end{proof}

The Gel'fand-Tsetlin polytope ${\rm GT}(2, n)$ is
the convex hull of the exponent vectors of the~$\binom{n}{2}$ images of~$\psi_{ij}$ in (\ref{eq:ringhomo2b}).
This is the {\em Newton-Okounkov body} \cite{LB} for  the toric degeneration of
 the Grassmannian $\mathrm{Gr}(2, n)$ with respect to the diagonal monomial order. 
We define the  {\em Cayley-Gel'fand-Tsetlin polytope} ${\rm CGT}(2,n)$ to be the
 Newton-Okounkov body of the toric degeneration~$\mathcal{T}(2,n)$.
 We here identify this toric variety
  with the projectivization of its cone in $\A^{2n-3} \times \A^{{n \choose 2}}$. 
Thus ${\rm CGT}(2,n)$ is the $(2n-3)$-dimensional
 polytope whose vertices are the exponent vectors of the $2n-3 + \binom{n}{2}$ monomials in (\ref{eq:ringhomo2b}). 
Here is a geometric construction:
 \begin{equation}
 \label{eq:CGT} {\rm CGT}(2, n) \,\,=\,\,
 \mathrm{conv}\bigl({\rm GT}(2, n) \!\times\! \{0\} \,\cup\, \Delta_{2n-4}\! \times\! \{1\}\bigr)
 \,\,\, \subset \,\,\RR^{\binom{n}{2}} \times \RR.
 \end{equation}
 In words, $ {\rm CGT}(2, n)$ is  the {\em Cayley sum} of ${\rm GT}(2,n)$ with the
 simplex $\Delta_{2n-4}$ that is formed by the $2n-3$
 coordinate points in $\RR^{\binom{n}{2}}$ which are indexed by pairs $i,j$ with $\{i,j\} \cap \{1,n\} \not= \varnothing$.

\begin{corollary} \label{eq:liftedGT}
The polytope ${\rm CGT}(2, n)$
has normalized volume~$\frac{2}{n} \binom{2n-2}{n-1} {-} 1$.
\end{corollary}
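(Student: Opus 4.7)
The plan is to combine three standard ingredients: the dictionary between normalized volumes and degrees for projective toric varieties, the flatness of the Khovanskii degeneration from Proposition~\ref{Thm:Khovanskii(2,n)}, and the value of the degree computed in Theorem~\ref{thm:conj55}.

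First, I would observe that the construction (\ref{eq:CGT}) identifies ${\rm CGT}(2,n)$ as precisely the lattice polytope whose vertices are the exponent vectors of the $2n-3+\binom{n}{2}$ monomials appearing in the parametrization (\ref{eq:ringhomo2b}) of $\mathcal{T}(2,n)$. These exponent vectors lie on the hyperplane where the sum of the $\xi$-variable coordinates plus the sum of the $\psi$-variable coordinates equals $1$ (since each image is a single monomial in $s$ or $t$), so ${\rm CGT}(2,n)$ is a lattice polytope of dimension $2n-3$ sitting at height one in the cone of the toric variety. This is the setup in which the projectivization of the affine cone over $\mathcal{T}(2,n)$ in $\A^{2n-3}\times\A^{\binom{n}{2}}$ is the projective toric variety associated to ${\rm CGT}(2,n)$.

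Next I would invoke the standard fact (see e.g.\ the theory of Newton-Okounkov bodies in \cite{LB, KM}) that the degree of a projective toric variety equals the normalized volume of its associated lattice polytope. Applied here, this gives
\[
\operatorname{vol}\!\bigl({\rm CGT}(2,n)\bigr)\;=\;\deg\!\bigl(\mathcal{T}(2,n)\bigr),
\]
where on the left we take the normalized lattice volume and on the right we use the total degree of the projectivization of the affine cone, just as in the statement of Theorem~\ref{thm:graphmap}.

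Finally, Proposition~\ref{Thm:Khovanskii(2,n)} tells us that $\mathcal{T}(2,n)$ arises as the initial degeneration of $\mathcal{G}(2,n)$ under a Khovanskii (equivalently, Gröbner) degeneration. Since such degenerations are flat, they preserve the total degree in the bigraded sense, so $\deg(\mathcal{T}(2,n))=\deg(\mathcal{G}(2,n))$. By Theorem~\ref{thm:conj55} (combined with Theorem~\ref{thm:graphmap}), this common value is $\tfrac{2}{n}\binom{2n-2}{n-1}-1$, which yields the claimed volume. The only subtle point, and the step I would be most careful about, is checking that the normalized volume convention matches the degree convention used in Theorem~\ref{thm:graphmap}: namely, that we are comparing the degree of the projectivized affine cone (dimension $2n-2$ in $\PP^{2n-3+\binom{n}{2}-1}$) with the $(2n-3)$-dimensional normalized lattice volume of ${\rm CGT}(2,n)$. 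With the Cayley description (\ref{eq:CGT}) this matching is transparent, because the Cayley construction encodes exactly the bihomogeneous coordinate ring whose Hilbert polynomial computes the total degree.
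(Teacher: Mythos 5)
Your proof is correct, but it follows a genuinely different route from the paper's. You invoke the degree-volume correspondence (a Kushnirenko-type theorem: the degree of the projective toric variety of a lattice point configuration $A$ equals the normalized volume of $\mathrm{conv}(A)$, provided $A$ affinely spans the lattice) together with the flatness of the Khovanskii degeneration from Proposition~\ref{Thm:Khovanskii(2,n)} to transfer the degree $\tfrac{2}{n}\binom{2n-2}{n-1}-1$ from $\mathcal{G}(2,n)$ to $\mathcal{T}(2,n)$ and then to the polytope. The paper instead argues combinatorially: the squarefree binomial Gr\"obner basis from Proposition~\ref{Thm:Khovanskii(2,n)} yields, by \cite[Chapter 8]{GBCP}, a \emph{regular unimodular triangulation} of ${\rm CGT}(2,n)$, whose simplices are identified with the maximal chains in the poset ${\rm P}_{2,n}$, of which Lemma~\ref{lem:chains} counts $2C_{n-1}-1$. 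The paper's argument is more self-contained (it avoids citing the degree-volume dictionary, since the unimodular triangulation computes the volume directly by counting), and as a bonus it exhibits the triangulation itself. Your approach is a clean alternative, though it leans on two nontrivial general facts (Kushnirenko plus flatness-preserves-degree) rather than the explicit combinatorics. One small imprecision to tidy up: you describe the vertices as lying on the hyperplane where ``the sum of the $\xi$-variable coordinates plus the sum of the $\psi$-variable coordinates equals $1$''; what is actually meant is that each exponent vector satisfies $(\text{exponent of } s) + (\text{exponent of } t) = 1$, which is why the polytope has dimension $2n-3$ rather than $2n-2$. You should also note explicitly that the exponent vectors affinely span $\ZZ^{2n-2}$ (they include $e_s$, $e_t$, and all $e_s + e_{x_{ij}}$), so the lattice normalization used in the Kushnirenko theorem agrees with the one used for the normalized volume of ${\rm CGT}(2,n)$.
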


\begin{proof}
We use results from \cite[Chapter 8]{GBCP}.
We saw  that
the binomial generators of the toric ideal of~$\mathcal{T}(2,n)$
form a Gr\"obner basis with squarefree leading monomials.
Its initial ideal
 defines a regular unimodular
triangulation of the
Cayley-Gel'fand-Tsetlin polytope ${\rm CGT}(2,n)$
   into $\frac{2}{n} \binom{2n-2}{n-1} - 1$ simplices.
 These simplices are the maximal chains in the poset~${\rm P}_{2,n}$. 
\end{proof}

We conclude this section with a census of small 
Cayley-Gel'fand-Tsetlin polytopes.

\begin{example} For each polytope ${\rm CGT}(2,n)$
we list the $f$-vector and~Ehrhart polynomial.
For instance, ${\rm CGT}(2,6)$ is a $9$-dimensional polytope with
$24$ vertices and $15$ facets: \\

\noindent
$n=4$: ${\rm dim} = 5$, $\, (11,32,42,28,9), \,\, \frac{1}{5!} (t+1)(t+2)(t+3)({\bf 9}t^2 + 26t + 20) , $ \smallskip \\
$n=5$: ${\rm dim} = 7$, $(17,77,166,200,141,57,12), $\\
\phantom{dodo} \hfill $ 
\frac{1}{7!}(t+1)(t+2)(t+3)(t+4)({\bf 27}t^3+164t^2+313t+210),$ \smallskip \\
$n=6$: ${\rm dim} = 9$, $(24,152,467,836,941,685,321,93,15),$ \\
\phantom{dodo} \hfill $ 
\frac{1}{9!}(t+1)(t+2)(t+3)(t+4)(t+5)({\bf 83}t^4+861t^3+3172t^2+4956t+3024).$
\end{example}

\section{Khovanskii bases and Gr\"obner bases for $\mathcal{G}(d, n)$}
\label{sec4}

This section  concerns the extension of Proposition \ref{Thm:Khovanskii(2,n)}
from~$d=2$ to~$d \geq 3$.
The CC degree of~${\rm Gr}(d,n)$ should be
the normalized volume of a polytope from a nice  toric degeneration 
of the graph~$\mathcal{G}(d,n)$.
It turns out that the general case is considerably more
difficult than the~$d=2$ case which was treated in Section \ref{sec3}.
From now on we assume~$d \geq 3$. The construction that follows
is related to the Cayley-Gel'fand-Tsetlin polytope via \cite[Section~4.1]{ABM}.

There are many known
toric degenerations of the Grassmannian~${\rm Gr}(d,n)$.
Our aim is to describe one particular degeneration of~${\rm Gr}(d,n)$
which can be lifted to the graph~$\mathcal{G}(d,n)$.
This will give rise to the desired toric degeneration~$\mathcal{T}(d,n)$
and its associated polytope.

We now index the coordinates~$\psi_\sigma$ and~$\xi_\sigma$
using the convention in \cite[Example 2.11]{IgMal22}.
Each element in~$\binom{[n]}{d}$ is represented by its unique
 {\em PBW tuple}~$\sigma = (\sigma_1,\ldots,\sigma_d)$. This means: 
\begin{itemize}
\item the entries~$\sigma_i$
are distinct elements of~$\{1,2,\ldots,n\}$, \vspace{-0.1in}
\item~$\sigma_i \leq d$ implies~$\sigma_i = i$,  \vspace{-0.1in}
\item and~$d < \sigma_j <\sigma_k$ implies~$j < k$.
\end{itemize}
We fix a monomial order on~$\CC[s,t,{\bf x}]$ which selects  diagonal leading terms for  PBW tuples:
\begin{equation}
\label{eq:PBWorder}
{\rm in} \bigl(\psi_\sigma \bigr) \,= \, x_{1\sigma_1} x_{2\sigma_2} \,\cdots\, x_{d\sigma_d}. 
\end{equation}
It is known from \cite{FFFM, MAKHLIN2022105541, IgMal22} that 
the $d \times d$ minors of the $d \times n$ matrix $(x_{ij})$ form a~Khovanskii basis
for the Grassmannian~${\rm Gr}(d,n)$ with respect to this monomial order.
Here, we can either take all $dn$ matrix entries $x_{ij}$ to be unknowns, or we can 
use a birational parametrization as in the Introduction, where
$x_{ij}$ is a variable if~$j \geq d+1$, and~$x_{ij} \in \{0,1\}$ if~$j \leq d$.
The resulting toric degeneration has the {\em Hibi-Li ideal}
\cite{HibiLi, MAKHLIN2022105541}
 of
a distributive lattice on~$\binom{[n]}{d}$ which is known as the
{\em PBW poset}. 
The PBW poset for~$d=3$ and~$n=6$ is shown in~Figure~\ref{figure:PBW}.
\begin{figure}[h]
\begin{center}
\vspace{-0.1in}
    \begin{tikzpicture}[scale=0.88]
      \node (123) at (1,0) {$123$};
      \node (124) at (0,1) {$124$};
      \node (143) at (-1,2) {$143$};
      \node (125) at (1,2) {$125$};
      \node (423) at (-2,3) {$423$};
      \node (145) at (0,3) {$145$};
      \node (126) at (2,3) {$126$};
      \node (425) at (-1,4) {$425$};
      \node (153) at (0,4) {$153$};
      \node (146) at (1,4) {$146$};
      \node (453) at (-1,5) {$453$};
      \node (426) at (0,5) {$426$};
      \node (156) at (1,5) {$156$};
      \node (523) at (-2,6) {$523$};
      \node (456) at (0,6) {$456$};
      \node (163) at (2,6) {$163$};
      \node (526) at (-1,7) {$526$};
      \node (463) at (1,7) {$463$};
      \node (563) at (0,8) {$563$};
      \node (623) at (1,9) {$623$};
\pgfdeclarelayer{bg} 
  \pgfsetlayers{bg,main}
  \begin{pgfonlayer}{bg}
\draw [gray, fill = lightgray] (123) circle [radius = 3.5mm];
\draw [gray, fill = lightgray] (124) circle [radius = 3.5mm];
\draw [gray, fill = lightgray] (125) circle [radius = 3.5mm];
\draw [gray, fill = lightgray] (126) circle [radius = 3.5mm];
\draw [gray, fill = lightgray] (143) circle [radius = 3.5mm];
\draw [gray, fill = lightgray] (423) circle [radius = 3.5mm];
\draw [gray, fill = lightgray] (153) circle [radius = 3.5mm];
\draw [gray, fill = lightgray] (523) circle [radius = 3.5mm];
\draw [gray, fill = lightgray] (163) circle [radius = 3.5mm];
\draw [gray, fill = lightgray] (623) circle [radius = 3.5mm];
\end{pgfonlayer}
  \begin{pgfonlayer}{bg}
\draw (123) -- (124) -- (125) -- (126) -- (146) -- (156) -- (163) --(463) -- (563) -- (526) -- (456) -- (156) -- (153);
\draw (426) -- (425) -- (453) -- (456) -- (463);
\draw (623) -- (563);
\draw (526) -- (523) -- (453) -- (153) -- (145) -- (125);
\draw (426) -- (146) -- (145) -- (425) -- (423) -- (143) -- (124);
\draw (145) -- (143);
\draw (426) -- (456);
\end{pgfonlayer}
    \end{tikzpicture}    
      \end{center}
      \vspace{-0.15in}
      \caption{\label{figure:PBW} 
      This diagram shows the PBW poset for $d=3,n=6$.
            The~$20$ PBW tuples~$\sigma$ index the 
coordinates~$\psi_\sigma$.
The~$10$ PBW tuples that also index coordinates $\xi_\sigma$~are marked in grey.
The incomparable pairs in the poset generate our initial monomial ideal of~${\rm Gr}(3,6)$.}    
\end{figure}
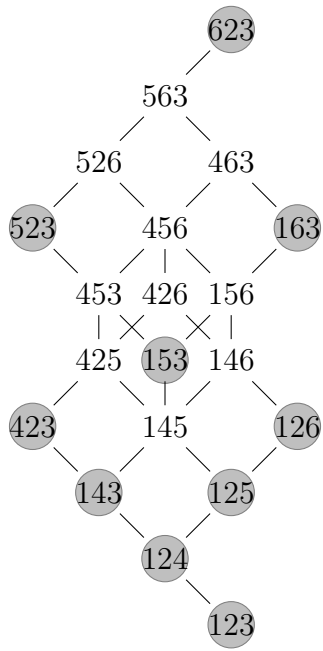

The Hibi-Li ideal describes the PBW degeneration of~${\rm Gr}(d, n)$.
In representation theory (cf.~\cite{ABM, FFFM}), this is   known as the
 \emph{Feigin–Fourier–Littelmann–Vinberg (FFLV) degeneration},
 and  we write  ${\rm FFLV}(d,n)$ for the associated polytope
 of dimension $d(n-d)$ with $\binom{n}{d}$ vertices.
 Thus, ${\rm FFLV}(d,n)$ is the convex hull of the exponent vectors
 of the  $\binom{n}{d}$ monomials in (\ref{eq:PBWorder}).
 
 In analogy to (\ref{eq:CGT}), we now define the
 \emph{Cayley–Feigin–Fourier–Littelmann–Vinberg polytope}~${\rm CFFLV}(d, n)$. This is a polytope of dimension
$d(n-d)+1$ with $\binom{n}{d}+d(n-d)+1$ vertices.
Geometrically, we construct ${\rm CFFLV}(d, n)$ as
the Cayley sum in $\RR^{\binom{n}{d}} \times \RR$ of the polytope ${\rm FFLV}(d,n)$ with the 
 simplex $\Delta_{d(n-d)}$ that is formed by the $d(n-d)+1$
 coordinate points which are indexed by 
 PBW tuples $\sigma$ with at most one entry $\sigma_j$ in~$[n] \backslash [d] = \{d+1,\ldots,n\}$.

 We conjecture that ${\rm CFFLV}(d,n)$ represents a toric degeneration $\mathcal{T}(d,n)$ of the
 projectivization of the affine cone in $\A^{d(n-d)+1}\times \A^{{n\choose d}}$ over the graph $\mathcal{G}(d,n)$. Thus $\mathcal{T}(d,n)$
 is a toric variety in~$\,\PP^{d(n-d)+\binom{n}{d}}$, and its
 degree equals the CC degree of~${\rm Gr}(d,n)$.
 Furthermore, the polytope
 ${\rm CFFLV}(d,n)$ has a  regular unimodular triangulation.
The number of simplices is the CC degree of ${\rm Gr}(d,n)$.
This is the content of
Conjecture~\ref{thm:KhovanskiiPBW}.
We illustrate all concepts and results for  $d=3 , n=6$ with a 
computation in~{\tt Macaulay2}~\cite{M2}.

\begin{example}[$d=3,n=6$] \label{ex:36}
The following code uses~${\tt p}$ and~${\tt q}$ for the Greek letters~$\psi$ and~$\xi$:
\begin{footnotesize}
\begin{verbatim}
R = QQ[t,s,x14,x15,x16,x24,x25,x26,x34,x35,x36,
       p623,q623,p563,p463,p526,p163,q163,p456,p523,q523,
       p156,p426,p453,p146,p153,q153,p425,p126,q126,p145,
       p423,q423,p125,q125,p143,q143,p124,q124,p123,q123, MonomialOrder=>Eliminate 11];
I = ideal(p123-t, q123-s,
          p145-t*(x24*x35-x25*x34), p146-t*(x24*x36-x26*x34), p156-t*(x25*x36-x26*x35),
          p425-t*(x14*x35-x15*x34), p426-t*(x14*x36-x16*x34), p453-t*(x14*x25-x15*x24),
          p463-t*(x14*x26-x16*x24), p526-t*(x15*x36-x16*x35), p563-t*(x15*x26-x16*x25),
          p456-t*(x14*x25*x36-x14*x26*x35-x15*x24*x36+x15*x26*x34+x16*x24*x35-x16*x25*x34),
          p124-t*x34, p125-t*x35, p126-t*x36, p143-t*x24, p153-t*x25, p163-t*x26,
          p423-t*x14, p523-t*x15, p623-t*x16, q124-s*x34, q125-s*x35, q126-s*x36,
          q143-s*x24, q153-s*x25, q163-s*x26, q423-s*x14, q523-s*x15, q623-s*x16);
G = ideal selectInSubring(1,gens gb I); toString mingens G
codim G, degree G, betti mingens G
M = ideal leadTerm ideal selectInSubring(1,gens gb I);
codim M, degree M, betti mingens M
\end{verbatim}
\end{footnotesize}
The output~${\tt G}$ is the homogeneous prime ideal of~$\mathcal{G}(3,6)$. It is minimally generated
by~$106$ quadrics. 
The command {\tt degree G} shows that $\mathcal{G}(3,6)$ has degree $250$.
This coincides with  the CC degree of~${\rm Gr}(3,6)$, by
 \cite[Example 5.7]{FSS}.
The output ${\tt M}$ is the initial ideal for the
reverse lexicographic monomial order given by a
linear extension of the PBW poset in Figure~\ref{figure:PBW}.

We compute the toric degeneration $\mathcal{T}(3,6)$ with the same code after replacing
 each parenthesized minor  by its leading monomial, e.g.~replace 
 {\tt (x24*x35-x25*x34}) by   {\tt x24*x35}. The output is a toric ideal of degree $250$, minimally generated by 
 $106$ quadrics and 
 one cubic.  The Khovanskii basis property holds because
 each of these binomials lifts to polynomial in~${\tt G}$.
 
    This toric ideal represents the
$10$-dimensional polytope ${\rm CFFLV}(3, 6)$.
 Its $f$-vector equals
$$ f_{{\rm CFFLV}(3, 6)} \,=\,(30, 236, 901, 2017, 2873, 2695, 1672, 670, 163, 21).$$
The command  {\tt hilbertPolynomial G} computes the Ehrhart polynomial
 of this polytope.
The graph $\mathcal{G}(3,6)$ and its toric degeneration $\mathcal{T}(3,6)$ have
the same initial monomial ideal, here denoted by {\tt M}.
This is generated by
$ 132$ squarefree polynomials: 106 quadrics, 21 cubics, and 5 quartics. 
We view  {\tt M} as the Stanley-Reisner ideal of a pure
simplicial complex of dimension $10$ with $250$ maximal simplices.
This  is our  unimodular triangulation of ${\rm CFFLV}(3, 6)$.
\end{example}

We conjecture the following result, which should
hold for arbitrary $d$ and $n$.

\begin{conjecture}  \label{thm:KhovanskiiPBW}
We fix the monomial orders on $\CC[\xi,\psi]$ and $\CC[s,t,{\bf x}]$ described above.
\begin{enumerate}
\item The FFLV Khovanskii basis of the Grassmannian
${\rm Gr}(d,n)$ 
  lifts to the graph~$\mathcal{G}(d,n)$.
\item The ideals of~$\mathcal{G}(d,n)$ and~$\mathcal{T}(d,n)$ have squarefree Gr\"obner bases.  
\item The normalized volume of the
Cayley–Feigin–Fourier–Littelmann–Vinberg polytope
${\rm CFFLV}(d, n)$ is equal to 
the coupled cluster degree of the Grassmannian ${\rm Gr}(d,n)$.
\end{enumerate}
\end{conjecture}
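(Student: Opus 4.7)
The plan is to mirror the proof of Theorem~\ref{thm:conj55} given in Section~\ref{sec3}, with the PBW poset of Figure~\ref{figure:PBW} playing the role of Young's poset. For part (1), the candidate Khovanskii basis lift is assembled in analogy with the six families of Lemma~\ref{GrBasisG(2,n)}. It consists of: the PBW/Hibi-Li Gr\"obner basis of~${\rm Gr}(d,n)$ (known from~\cite{MAKHLIN2022105541, IgMal22}); the $2 \times 2$ minors of the $2 \times (d(n-d)+1)$ matrix whose rows are the coordinates~$\psi_\sigma$ and~$\xi_\sigma$ indexed by PBW tuples~$\sigma$ with at most one entry exceeding~$d$; and the polarizations of each Hibi-Li relation, obtained by substituting one~$\xi$-coordinate for the corresponding~$\psi$-coordinate whenever this substitution is well-defined. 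Each of these polynomials vanishes on~$\mathcal{G}(d,n)$ by direct substitution into the analog of~\eqref{eq:ringhomo2a}. Under the monomial order that inserts each~$\xi_\sigma$ immediately before~$\psi_\sigma$, the leading term of every lifted generator coincides with the leading term of a unique binomial generator of the toric ideal of~$\mathcal{T}(d,n)$.

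For part (2), the task is to show that these leading terms already generate the full initial ideal~$M$ of~$\mathcal{G}(d,n)$. I would proceed as in the proof of Lemma~\ref{GrBasisG(2,n)}: take any bihomogeneous~$f$ in the ideal of~$\mathcal{G}(d,n)$ and replace each~$\xi$-variable appearing in its leading monomial by the corresponding~$\psi$-variable. The resulting polynomial vanishes on~${\rm Gr}(d,n)$, and the crucial property that our monomial order respects the substitution~$\xi \mapsto \psi$ forces either the PBW Gr\"obner basis of~${\rm Gr}(d,n)$ to provide a leading-term divisor, or else a cancellation argument to identify a~$\xi$-$\psi$ pair with matching minimal index, producing a rank-one leading monomial. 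The ideal~$M$ in turn is identified with the Stanley-Reisner ideal of the chain complex of an extended poset~${\rm P}_{d,n}$, built by taking two copies of the PBW poset (labeled by~$\psi$ and~$\xi$) and gluing them along cover relations~$\xi_\sigma \to \psi_\sigma$ for the tuples~$\sigma$ with at most one entry above~$d$.

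Part (3) then follows from standard machinery, e.g.~\cite[Chapter~8]{GBCP}: a squarefree initial ideal of the toric ideal of~$\mathcal{T}(d,n)$ corresponds to a regular unimodular triangulation of~${\rm CFFLV}(d,n)$, whose number of maximal simplices equals the normalized volume; combining with Theorem~\ref{thm:graphmap} this equals the CC degree. The main obstacle is the combinatorial analysis for~$d \geq 3$. For~$d=2$ the PBW Gr\"obner basis is purely quadratic, ${\rm P}_{2,n}$ admits the transparent outer-chain decomposition of Lemma~\ref{lem:chains}, and the count collapses to Catalan numbers. For~$d \geq 3$, the toric ideal of~$\mathcal{T}(d,n)$ already has non-quadratic Gr\"obner generators (e.g.~the cubic visible in Example~\ref{ex:36}, and presumably quartics or higher as~$n$ grows), so the lifted Gr\"obner basis has correspondingly higher-degree polarized elements whose leading monomials must be tracked with care. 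Producing a closed-form enumeration of the facets of the resulting triangulation of~${\rm P}_{d,n}$, in the spirit of~$\tfrac{2}{n}\binom{2n-2}{n-1}-1$, appears out of reach for arbitrary~$d$, but the three structural assertions above should be verifiable in a uniform way once the correct polarization recipe for the higher-degree Hibi-Li relations is pinned down.
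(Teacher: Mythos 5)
The statement you are attempting to prove is labeled \emph{Conjecture}~\ref{thm:KhovanskiiPBW} in the paper, not a theorem, and the paper offers no proof of it for general $d$ and $n$. The authors verify it computationally in {\tt Macaulay2} for $(d,n) \in \{(3,6),(3,7),(3,8),(3,9),(4,8)\}$ (Remark~\ref{rmk:symbolic}) and explicitly note that the subsequent work of Feigin~\cite{Fei}, while establishing a toric degeneration of $\mathcal{G}(d,n)$, ``does not prove Conjecture~\ref{thm:KhovanskiiPBW}.'' So there is no paper proof to compare against; your proposal must stand on its own.

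On its own terms, the proposal contains a genuine gap that cannot be waved away. You propose to identify the initial ideal $M$ of $\mathcal{G}(d,n)$ with ``the Stanley-Reisner ideal of the chain complex of an extended poset ${\rm P}_{d,n}$,'' mirroring the $d=2$ argument. But the Stanley-Reisner ideal of a chain complex of a poset is generated by the quadratic squarefree monomials coming from incomparable pairs — it is always generated in degree two. The paper's own computation in Example~\ref{ex:36} shows that already for $(d,n)=(3,6)$ the initial ideal $M$ requires $21$ cubic and $5$ quartic squarefree generators in addition to $106$ quadrics. So $M$ is \emph{not} the Stanley-Reisner ideal of a chain complex of any poset, and the clean combinatorial model that drives the proof of Lemma~\ref{GrBasisG(2,n)} and Lemma~\ref{lem:chains} is structurally unavailable for $d \geq 3$. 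Correspondingly, the case analysis in the proof of Lemma~\ref{GrBasisG(2,n)} — which reduces every leading monomial of an ideal element either to a Pl\"ucker quadric or to a quadratic rank-one binomial — cannot be transplanted: for $d\geq 3$ there are leading monomials in $M$ divisible by no quadric, so the dichotomy ``cancellation $\Rightarrow$ rank-one binomial, no cancellation $\Rightarrow$ Pl\"ucker relation'' breaks down. You flag the appearance of higher-degree generators yourself, but deferring their treatment to ``once the correct polarization recipe for the higher-degree Hibi-Li relations is pinned down'' is precisely deferring the unresolved content of the conjecture. Without a concrete description of those higher-degree elements and a proof that their leading terms, together with the quadratic ones, generate the full initial ideal, parts (1) and (2) remain open, and part (3) (which you correctly observe would follow formally from (2) via~\cite[Chapter~8]{GBCP} and Theorem~\ref{thm:graphmap}) remains conditional.
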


A first version of this article was posted in October 2023.
A follow-up paper by Evgeny Feigin \cite{Fei} from March 2024
establishes a toric degeneration of graph~$\mathcal{G}(d,n)$
for arbitrary $d$ and $n$. Feigin's work rests on a different method
and it does not prove Conjecture \ref{thm:KhovanskiiPBW}.
We are hopeful that, by combining the two approaches, an explicit 
Khovanskii basis for $\mathcal{G}(d,n)$ can be found.
We conclude the present paper with two remarks of a computational nature.

\begin{remark} \label{rmk:symbolic}
The computation in Example \ref{ex:36} 
serves as a proof for Conjecture \ref{thm:KhovanskiiPBW}
in the case $(d,n)=(3,6)$.
The same computation in {\tt Macaulay2} terminated successfully for
$\, (d,n) \, = \,(3,7),(3,8),(3,9),(4,8)$.
We therefore have a complete computational proof of 
 Conjecture~\ref{thm:KhovanskiiPBW} in these special cases.
 The {\tt Macaulay2} code for these computations can be found on
{\tt MathRepo} at the URL in (\ref{eq:mathrepo}).
For the cases $(d,n) = (3,10), \,(4,9)$, we computed the
degree of $\mathcal{T}(d,n)$ in {\tt Macaulay2}, so the
correctness of the final number assumes the validity of
Conjecture \ref{thm:KhovanskiiPBW} (1).
   The  CC degrees for the three largest among the above cases was reported in Example~\ref{ex:bigCC}.
 \end{remark}

\begin{remark}
We solved the CC equations (\ref{eq:CCeqns}) using the numerical algebraic geometry software
{\tt HomotopyContinuation.jl}~\cite{HomotopyContinuation} for the cases in Remark~\ref{rmk:symbolic}.
This rests on the methodology described in \cite[Section 6]{FSS}.
This numerical computation
yields independent support for the fact that the CC degree of the Grassmannian equals
the  volume of the CFFLV polytope.
\end{remark}

 Khovanskii bases
have emerged as an important tool in the recent literature on
polynomial system solving \cite{BPT} and its applications in the
physical sciences \cite{BBPMZ}.
Theorems \ref{thm:graphmap} and~\ref{thm:conj55} and Conjecture \ref{thm:KhovanskiiPBW}
lay the foundation for a new class of
custom-taylored homotopies for numerically solving the CC equations.
Viewed from this angle, this article
offers the potential for considerable practical implications in coupled cluster theory,
well beyond those in \cite{FO, FSS}.

\section*{Acknowledgements.} VB was supported by the Deutsche Forschungsgemeinschaft (DFG), Projekt-Nr.~445466444.

\bibliographystyle{plain}

\bigskip
 \bigskip

\noindent
\footnotesize
{\bf Authors' addresses:}

\smallskip

\noindent Viktoriia Borovik,
Universit\"at Osnabr\"uck
\hfill {\tt vborovik@uni-osnabrueck.de}

\noindent Bernd Sturmfels,
MPI-MiS Leipzig
\hfill {\tt bernd@mis.mpg.de}

\noindent Svala Sverrisd\'ottir,
UC Berkeley
\hfill {\tt svalasverris@berkeley.edu}

\end{document}